\newtheorem{Theorem}{Theorem}[part]
\newtheorem{Lemma}{Lemma}[part]
\newtheorem{Rem}{Remark}
\newtheorem{Def}{Definition}
\newtheorem{Prop}{Proposition}[part]
\title{Optimal stopping in mean field games, an obstacle problem approach}
\author{Charles Bertucci}
\thanks{This work is supported by a grant from the Fondation CFM pour la recherche.}
\address{Université Paris-Dauphine, PSL Research University,UMR 7534, CEREMADE, 75016 Paris, France}
\date{} % delete this line to display the current date
\begin{document}

\maketitle
\begin{abstract}
This paper is interested in the problem of optimal stopping in a mean field game context. The notion of mixed solution is introduced to solve the system of partial differential equations which models this kind of problem. This notion emphasizes the fact that Nash equilibria of the game are in mixed strategies. Existence and uniqueness of such solutions are proved under general assumptions for both stationary and evolutive problems.
\end{abstract}
\tableofcontents

\section{Introduction}
\subsection{General introduction}
This paper is the first of a series devoted to the systematic study of mean field games (MFG) of optimal stopping or impulse controls. We solve here a system of forward-backward obstacle problems which models MFG of optimal stopping without common noise. The case of the "Master equation" will be considered in a subsequent work.

In the past decade, MFG have been broadly studied since their introduction by Lasry and Lions in their series of paper [17,18,19] and it has been shown they have lots of applications [14,15]. From the well posedness of the problem [5,22] to the difficult question of the master equation [7,22] through numerical questions [1,3] and developments like long time average [10] or learning in MFG [9], the original model has been the source of a huge number of mathematical questions. A very powerful probabilistic point of vue has also been developed [11,16]. We refer to [22] for a large study of the problem. We briefly recall the model proposed by Lasry and Lions when there is no common noise. A continuum of indiscernable players, which is characterized by a measure $m$, faces an optimal control problem (which is stochastic here but can also be deterministic) whose value function is denoted by $u$. The players are initially described by the measure $m_0$ and are only interested in the repartition of the other players. Under general assumptions on the regularity and geometry of the cost functions (involved in both the running and terminal costs, respectively $f$ and $g$), the value function $u$ satisfies a Hamilton-Jacobi-Bellman equation given the evolution of the measure of the player. On the other hand, given the value function and then the behavior of the players, the measure $m$ satisfies a transport equation. Nash equilibria of the game are then given by solutions of the MFG system :

\begin{equation}
\begin{cases}
- \partial_t u - \Delta u + H(x, \nabla u) = f(x,m) \text{  in } \mathbb{R}^d \times(0,T)\\
\partial_t m - \Delta m - div(D_p(H(x, \nabla u))m) = 0 \text{  in } \mathbb{R}^d \times (0,T) \\
u(T) = g(T, m(T)) , m(0) = m_0
\end{cases}
\end{equation}

Our purpose is to study the analogue of $(1)$ for an optimal stopping problem. In this setting, players do not affect their velocity anymore but choose a time after which they definitively leave the game. We present our results in three different situations : a stationary case, a time dependent case with a general exit cost and a last case in which players can both leave the game and affect their velocity like in the case we just described. We are going to introduce the notion of mixed solutions for MFG, which describes Nash equilibria for MFG in mixed strategies. As we shall see, this notion is both analytically natural and the good notion of solution for MFG with optimal stopping. Recently some results have been obtained concerning this problem. In [23], the author solves a game of optimal stopping in a MFG and in [12], the authors prove an interesting and general result of existence in a probabilistic approach of the problem. We may also mention [13], in which the authors studied a MFG type system for an obstacle problem, which is the natural Hamilton-Jacobi-Bellman equation for an optimal stopping problem. Note that MFG with optimal stopping are very natural, from at least two perspectives. First, optimal stopping problems are interesting in themselves like for example resistance games or the modeling of american options. Then, it appears natural to allow the players to leave the MFG, because it is very restrictive to impose that all the players have to stay until the end like it is done in the classical MFG model. Indeed, it is more realistic to allow the players to leave the game by paying an exit cost.

\subsection{The model}
We present here the typical framework which is beneath the idea of optimal stopping in MFG. We refer to the next parts for more precise statements as we only want to give an intuition on why the model we are going to study is general and natural. We assume that there is an infinite number of players and we associate to each player $i$ a diffusion which satisfies :

\[
\begin{cases}
dX^i_t = \sqrt{2} dW^i_t\\
X^i_0 = x_i \text{ in } \Omega
\end{cases}
\]

here $\Omega$ is a bounded, smooth open set of $\mathbb{R}^d$ with $d \geq 1$. The player $i$ has to make the choice of a stopping time $\tau$ which has to be measurable for the $\sigma$-algebra generated by the $d$-dimensional brownian motion $(W^i_t)_{t \geq 0}$. We assume that all the brownian motions $((W^i_t)_{t \geq 0})_i$ are independent.  The cost is then defined by 

\[
\int_0^{\tau}f(X^i_s, m(s))ds + \psi(X_{\tau}^i, m({\tau}))
\]

where $m(t)$ is the measure which characterizes the repartition of the player in $\Omega$ at time $t$. The players minimize the expectation of this cost. We work with a finite horizon $T$ and assume that if the diffusion $(X^i_t)$ reaches the boundary $\partial \Omega$ of $\Omega$ at time $t^*$, then the player $i$ exits the game paying the cost $\int_0^{t^*}f(X^i_s, m(s))ds$. Given the evolution of the measure $m(t)$, the value function of the problem is a solution of the obstacle problem :

\begin{equation}
\begin{cases}
\max(-\partial_t u(t,x) -\Delta u(t,x) - f(x, m(t)), u(t,x) - \psi(x, m(t))) = 0\\ u(T,x) = \psi(x, m(T)) \\ \forall t \leq T, u(t,x) = 0 \text{ on } \partial \Omega
\end{cases}
\end{equation}

See for example [2] for more details on Hamilton-Jacobi-Bellman equations. On the other hand, given the value function $u$ and the fact that it is optimal to leave the game at $(t,x)$ if $u(t,x) = \psi(t,x)$, $m$ satisfies

\begin{equation}
\begin{cases}
\partial_t m -\Delta m = 0 \text{ on } \{(t,x) \in ]0,T[ \times \Omega / u(t,x) < \psi(x,m(t))\} \\
m(0) = m_0 \\
m = 0 \text{ elsewhere}
\end{cases}
\end{equation}

because the players move freely in the set $\{ u < \psi(m)\}$ since it is optimal to stay in the game. In this set, the players evolve only through the diffusion to which they are associated (they use no control). On the other hand, because it is optimal to leave the game in the set $\{u = \psi\}$, we want $m$ to be $0$ in this set, as all the players are leaving. We do not make precise here the sense in which systems $(2)$ and $(3)$ have to be taken as it will be the subject of later discussions. In the same way Nash equilibria in the classical MFG are given by solutions of the system (MFG), we expect Nash equilibria of our problem to be the solutions of 

\begin{equation*}{(OSMFG)}
\begin{cases}
\begin{aligned}
&\max(-\partial_t u(t,x) -\Delta u(t,x) - f(x, m(t)), u(t,x) - \psi(x, m(t))) = 0\\ & \forall (t,x) \in (0,T) \times \Omega \end{aligned}\\ 
\partial_t m -\Delta m = 0 \text{ on } \{(t,x) \in ]0,T[ \times \Omega / u(t,x) < \psi(x,m(t))\} \\
m = 0 \text{ elsewhere} \\u(T,x) = \psi(x, m(T)) \text{ on } \Omega \\ \forall t \leq T, u(t,x) = 0 \text{ on } \partial \Omega
\end{cases}
\end{equation*}

For pedagogical reasons, we first solve a stationary setting in which the time variable has disappeared and the leaving cost is $0$. In this setting, we are forced to introduce a source of players. This term can be interpreted as some players (who are still identical to all the others) entering the game uniformly in time. If we do not add this term, then the only equilibrium is $m=0$ because, almost surely, the trajectories of the players touch the boundary of $\Omega$ and thus the associated players leave the game. We denote it by $\rho$. This term is completely arbitrary and do not play a strong role in the qualitative approach we present. We also add a first order term in the equations in "$u$" and "$m$". The first one stands for a preference for the present over the future while the second one stands for a "natural" death rate of players. The "simpler" version of $(OSMFG)$ is then

\begin{equation*}{(SOSMFG)}
\begin{cases}
\max( -\Delta u + u - f(x, m), u) = 0 \text{ in } \Omega \\  -\Delta m + m = \rho \text{ on } \{u < 0\} \\
m = 0 \text{ on } \{ u = 0 \} \\ u = m = 0 \text{ on } \partial \Omega
\end{cases}
\end{equation*}

This system is the subject of the first part of this article and we extend our results to the case of $(OSMFG)$ in the second part. In the last part we present the case where both optimal stopping and continuous control are possible. This last setting leads to the following system :

\begin{equation*}{(COSMFG)}
\begin{cases}
\max(-\partial_t u - \Delta u + H(x, \nabla u) - f(m), u) = 0 \text{ in } (0,T) \times \Omega \\
\partial_t m - \Delta m - div(m D_p H(x, \nabla u)) = 0 \text{ in } \{ u < 0 \} \\
m = 0 \text{ in } \{ u = 0 \} \\
u = m = 0 \text{ on } \partial \Omega \\
m(0) = m_0 \text{ and } u(T) = 0 \text{ in } \Omega
\end{cases}
\end{equation*}

 where $H$ is the Fenchel conjugate of the part of the running cost which depends on the control. More details on this problem are given in the last part.\\
 \\
Please note that every time we consider the case $\psi(m) = 0$, we are in fact considering the case where $\psi$ does not depend on $m$ (it could depend on $x$), as we can change the cost $f$ with the addition of $\partial_t \psi + \Delta \psi$ to pass from a problem in which there is an exit cost to the case in which this cost is $0$.

\subsection{Assumptions}
We present here the regularity assumptions which hold for the rest of this discussion as well as some notations
\begin{itemize}
\item $\Omega$ is an open bounded subset of $\mathbb{R}^d$ with a smooth ( say $C^2$) boundary
\item $m_0 \in L^2(\Omega)$
\item $f$ is a contiuous application from $L^2(\Omega)$ to $L^2(\Omega)$
\item $\psi$ is a continuous application from $L^2((0,T), L^2(\Omega))$ to $L^2((0,T),H^1_0(\Omega) \cap H^2(\Omega)) \cap H^1((0,T), L^2(\Omega))$
\item $\rho \in C^{\infty}(\Omega)$ with compact support in $\Omega$ and $\rho \geq 0$.
\end{itemize}

\part{The stationary problem}
\section{Preliminary results}
In this section, we present useful results regarding the (stationary) problems (2) and (3). We address here the question of regularity of those problems in order to present the results of regularity for the MFG problem later on.
\subsection{The obstacle problem}
This problem is classical (see for instance [4]). For any $f$ of $L^2(\Omega)$, a solution $u$ of the obstacle problem is such that

\[
\max(-\Delta u + u - f, u) = 0 \text{ in } \Omega, \text{ } u \in H^2(\Omega)\cap H^1_0(\Omega)
\]

Existence and uniqueness of solutions of this problem can be found in [19]. Also, the mapping from $L^2(\Omega)$ to $H^2(\Omega)\cap H^1_0(\Omega)$ which associates to each $f$ the solution of the previous problem is continuous with respect to the canonical norms of those sets. Moreover, the sequence $(u_{\epsilon})_{\epsilon > 0}$ of $H^2(\Omega)\cap H^1_0(\Omega)$ converges to $u$, solution of the obstacle problem with source $f$, for the $H^1(\Omega)$ norm, if $u_{\epsilon}$ is defined for all $\epsilon$ as the unique solution of

\[
\begin{cases}
-\Delta u_{\epsilon} + u_{\epsilon} + \frac{1}{\epsilon} (u_{\epsilon})^+ = f \text{ in } \Omega \\
u_{\epsilon} = 0 \text{ on } \partial \Omega
\end{cases}
\]

\subsection{The equation in $m$}
We are here interested in the regularity of the solution $m$ of 

\[
\begin{cases}
-\Delta m + m = \rho \text{ in } \omega \\
m = 0 \text{ on } \partial \omega
\end{cases}
\]

where $\omega$ is any open subset included in $\Omega$. We extend $m$ by $0$ on $\Omega \setminus \omega$. Obviously, there exists a unique solution of this problem in $H^1_0(\omega)$ and we cannot expect further regularity without any assumptions on the regularity of $\omega$. ($H^1_0(\omega)$ is here defined as the closure of the $C^{\infty}$ functions with compact support in $\omega$ for the $H^1(\Omega)$ norm.)  Then, if $(\omega_n)_{n \geq 0}$ is a sequence of open subsets of $\Omega$ which converges towards $\omega$, open set of $\Omega$, with respect to the Hausdorff distance then the sequence of associated solutions $(m_n)_{n \geq 0}$ converges towards $m$ for the norm of $H^1(\Omega)$. Finally, if we define $m_{\epsilon}$ as the unique solution of

\[
\begin{cases}
-\Delta m_{\epsilon} + m_{\epsilon} + \frac{1}{\epsilon} \mathbb{1}_{\omega^c} m_{\epsilon} = \rho \text{ in } \Omega \\
m_{\epsilon} = 0 \text{ on } \partial \Omega
\end{cases}
\]

then $(m_{\epsilon})_{ \epsilon > 0}$ converges to $m$ for the norm of $L^2(\Omega)$. We also recall the following result, which is very simple and that we shall need later.

\begin{Lemma}
Let $m \in H^1_0(\Omega)$ be such that 

\begin{itemize}
\item there exists $\omega$ such that $ - \Delta m + m = \rho$  on  $\omega$ 
\item $ - \Delta m + m \leq \rho$ in the sense of distributions on $\Omega$
\end{itemize}

and denote by $u \in H^2(\Omega) \cap H^1_0(\Omega)$ a negative function, then 

\[
\int_{\Omega} (- \Delta u + u) m  \geq \int_{\Omega} u \rho
\]
and we have an equality if the support of $u$ is included in $\omega$

\end{Lemma}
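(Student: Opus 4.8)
The plan is to reduce the statement to a single integration by parts combined with the elementary observation that testing a distributional inequality against a nonpositive function reverses its direction.

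First I would symmetrize the left-hand side. Since $u \in H^2(\Omega)\cap H^1_0(\Omega)$ and $m \in H^1_0(\Omega)$, Green's formula gives
\[
\int_{\Omega} (-\Delta u + u)\, m \;=\; \int_{\Omega} \big( \nabla u \cdot \nabla m + u\,m \big),
\]
with no boundary contribution because $u = m = 0$ on $\partial\Omega$. The right-hand side is the symmetric bilinear form $a(v,w) := \int_{\Omega} (\nabla v\cdot\nabla w + v\,w)$ attached to $-\Delta + \mathrm{Id}$; in particular, viewing $-\Delta m + m$ as the element of $H^{-1}(\Omega)$ defined by $w \mapsto a(m,w)$, we obtain $\int_{\Omega}(-\Delta u + u)\,m = \langle -\Delta m + m,\, u\rangle_{H^{-1},\,H^1_0}$.

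Next I would exploit the hypothesis $-\Delta m + m \le \rho$ in $\mathcal D'(\Omega)$. Written for nonnegative $\varphi \in C_c^{\infty}(\Omega)$ it reads $\langle -\Delta m + m,\varphi\rangle \le \int_{\Omega} \rho\,\varphi$; since $\rho \in L^2(\Omega)$ the right-hand side is continuous for the $H^1_0$ topology, and nonnegative smooth compactly supported functions are dense in the nonnegative cone of $H^1_0(\Omega)$ (truncate and mollify), so the inequality persists for every $\varphi \ge 0$ in $H^1_0(\Omega)$. Now $u \le 0$ a.e., hence $-u$ is such a $\varphi$, and $\langle -\Delta m + m,\, -u\rangle \le \int_{\Omega} \rho\,(-u)$, i.e. $\langle -\Delta m + m,\, u\rangle \ge \int_{\Omega} \rho\, u$. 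Combining with the identity of the previous step gives $\int_{\Omega}(-\Delta u + u)\,m \ge \int_{\Omega} u\,\rho$, the desired inequality.

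For the equality case, assume $\operatorname{supp} u \subset \omega$; then $u$ can be approximated in the $H^1$ norm by functions of $C_c^{\infty}(\omega)$. Since $-\Delta m + m - \rho = 0$ in $\mathcal D'(\omega)$, the pairing $\langle -\Delta m + m - \rho,\, u\rangle$ is a limit of zeros, so $\langle -\Delta m + m,\, u\rangle = \int_{\Omega} \rho\,u$, which upgrades the inequality to an equality. The only point requiring a little care — the main (though mild) obstacle — is the density argument that licenses testing the distributional (in)equalities against the merely $H^1_0$ functions $\pm u$ rather than against smooth compactly supported ones; it rests on the $L^2$-regularity of $\rho$, which makes $w \mapsto \int_{\Omega}\rho w$ continuous on $H^1_0(\Omega)$, on the identification of $-\Delta m + m$ with an element of $H^{-1}(\Omega)$ via $a$, and, for the equality statement, on the approximability of $u$ by $C_c^{\infty}(\omega)$ functions.
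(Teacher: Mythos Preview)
Your proof is correct and follows exactly the approach the paper intends: the paper's own proof consists of the single sentence ``this result is simply the translation in a variational form of the assumption $-\Delta m + m \le \rho$,'' and your argument (symmetrize via Green's formula, then test the distributional inequality against the nonnegative function $-u$ by density) is precisely that translation made rigorous. The density considerations you flag as the mild obstacle are the only points the paper leaves implicit, and you handle them correctly.
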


\begin{proof}
This result is simply the translation in a variational form of the assumption $ - \Delta m + m\leq \rho$.

\end{proof}

\section{First properties of the system}
We now turn to the presentation of some results which give a better idea of the situation we are trying to model. Mainly we prove uniqueness and non existence of solutions for $(SOSMFG)$. Those results highlight that we have to adopt a new definition of solutions in order to find Nash equilibria of the MFG. We recall the notion of monotone applications on functions' spaces which is, as usual, crucial in the study of MFG. An application $T$ from $L^2(\Omega)$ into itself is said to be monotone if

\[
 \forall m_1, m_2 \in L^2(\Omega), \int_{\Omega}(T(m_1) - T(m_2))(m_1 - m_2) \geq 0
\]

It is strictly monotone if the inequality is strict as soon as $m_1 \ne m_2$. $T$ is said to be anti-monotone if $-T$ is monotone.\\
\\
We first prove a uniqueness result which depends on the monotonicity of the cost $f$.

\begin{Theorem}
If $f$ is strictly monotone, then there exists at most one solution of $(SOSMFG)$.
\end{Theorem}
\begin{proof}
We shall present here an analogue of the proof of uniqueness of Lasry and Lions in [19]. We denote by $(u_1, m_1)$ and $(u_2, m_2)$ two solutions of $(SOSMFG)$ and note $ u = u_1 - u_2$, $m = m_1 - m_2$ and the continuation sets $\Omega_1 = \{u_1 < 0 \}$ and $\Omega_2 = \{u_2 < 0 \}$. We then compute :

\begin{equation}
\begin{aligned}
\int_{\Omega_1 \cup \Omega_2} (-\Delta u + u) m = &  \int_{\Omega_1 \cap \Omega_2} (f(m_1) - f(m_2))m \\ &+  \int_{\Omega_1 \setminus \Omega_2} f(m_1)m_1 + \int_{\Omega_2 \setminus \Omega_1} f(m_2)m_2 
\end{aligned}
\end{equation}

We here use the fact that on $\Omega_i^c$, $-\Delta u_i = 0$ which holds because the obstacle problem 

\[
\max(-\Delta u_i + u_i -f(m_i),u_i) = 0 \text{ in } \Omega
\]

holds true in $L^2(\Omega)$.\\
\\
Let us remark that on $\Omega_2^c$, $m_2 = 0$ and $f(m_2) \geq 0$ because $(u_2,m_2)$ is a solution of $(SOSMFG)$. Thus, it is in particular true on $\Omega_1 \setminus \Omega_2$. Hence the following inequality is true.

\[
\int_{\Omega_1 \setminus \Omega_2} f(m_1)m_1\geq \int_{\Omega_1 \setminus \Omega_2} (f(m_1) - f(m_2))(m_1 - m_2)
\]

With the same argument on the third term of the right hand side of $(4)$, we obtain 

\[
 \int_{\Omega_1 \cup \Omega_2} (-\Delta u + u) m  \geq  \int_{\Omega_1 \cup \Omega_2} (f(m_1) - f(m_2)) m \geq 0
\]

We now evaluate the sign of this term in a different way. First, let us remark that because of the fact that $(u_1, m_1)$ and $(u_2, m_2)$ are solutions of $(SOSMFG)$, we know using lemma 1.1 that

\[
\int_{\Omega} (-\Delta u_1 + u_1) m_1  +  \int_{\Omega} (-\Delta u_2 + u_2) m_2  = \int_{\Omega} (u_1 + u_2) \rho
\]

This equality gives 

\[
\begin{aligned}
\int_{\Omega} (-\Delta u + u) m  &= - \int_{\Omega} (-\Delta u_1 + u_1) m_2 + \int_{\Omega} u_1 \rho \\
& - \int_{\Omega} (-\Delta u_2 + u_2) m_1 + \int_{\Omega} u_2 \rho
\end{aligned}
\]

Using once again lemma 1.1 we deduce

\[
\begin{aligned}
\int_{\Omega} (-\Delta u + u ) m \leq 0
\end{aligned}
\]

And finally we obtain
\[
\int_{\Omega} (-\Delta u + u) m = \int_{\Omega_1 \cup \Omega_2} (f(m_1)- f(m_2))m = 0
\]

So by strict monotonicity, $m_1 = m_2$ and the result is proved.

\end{proof}

We observe that the monotonicity of $f$ is important for the question of uniqueness.We now show it is in general necessary.

\begin{Prop}
Uniqueness does not hold in general for $(SOSMFG)$.
\end{Prop}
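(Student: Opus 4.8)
The plan is to construct an explicit counterexample, exploiting the fact that when $f$ is not monotone the map $m \mapsto f(x,m)$ can have multiple fixed-point-like behaviors that propagate through the coupled system. The cleanest route is to look for \emph{radial} solutions on a ball $\Omega = B_R \subset \mathbb{R}^d$, where both the obstacle problem and the equation in $m$ reduce to ODEs, and to engineer $f$ so that two distinct continuation sets $\{u<0\}$ are each self-consistent. I would first fix a convenient source $\rho$ (a smooth bump supported near the origin) and consider two candidate continuation sets, say a small ball $\omega_1 = B_{r_1}$ and a larger ball $\omega_2 = B_{r_2}$ with $r_1 < r_2 < R$. For each $\omega_i$, solving $-\Delta m_i + m_i = \rho$ in $\omega_i$, $m_i = 0$ on $\partial \omega_i$ (extended by $0$) gives an explicit nonnegative radial function $m_i \in H^1_0(\Omega)$; these are the two candidate measures.

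The second step is to run the obstacle problem backwards: I want a single continuous $f$ such that the solution $u_i$ of $\max(-\Delta u_i + u_i - f(m_i), u_i) = 0$ has coincidence set exactly $\Omega \setminus \omega_i$, i.e. $\{u_i < 0\} = \omega_i$. Prescribing the desired $u_i$ directly is easier: pick radial $u_i \le 0$ with $u_i < 0$ on $\omega_i$, $u_i = 0$ on $\Omega \setminus \omega_i$, and $u_i \in H^2 \cap H^1_0$ (so $u_i$ and $\nabla u_i$ match continuously across $\partial \omega_i$); then on $\omega_i$ one simply \emph{reads off} the required value $f(m_i) = -\Delta u_i + u_i$, while on $\Omega \setminus \omega_i$ the obstacle-problem constraint only demands $f(m_i) \ge 0$ there, and $m_i = 0$ there so the coupling imposes nothing further on $f$ outside $\omega_i$ beyond a sign condition. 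The task then reduces to: can one choose the data so that the two prescribed pairs of functions $(m_1, f(m_1))$ and $(m_2, f(m_2))$ are consistent with a \emph{single} continuous map $f : L^2(\Omega) \to L^2(\Omega)$? Since $m_1 \ne m_2$ as elements of $L^2(\Omega)$, I only need $f$ to take the two prescribed values at the two points $m_1, m_2$; any continuous interpolation (e.g. an affine map through these two prescribed values, suitably cut off to stay in $L^2$ and to respect the sign constraint $f(m_i) \ge 0$ on $\omega_i^c$) works, and such an $f$ is necessarily \emph{not} monotone because the construction forces $\int_\Omega (f(m_1) - f(m_2))(m_1 - m_2) < 0$ — which is exactly the obstruction identified in the proof of Theorem.

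The main obstacle is the consistency bookkeeping across the free boundary: I must ensure that the prescribed $u_i$ really is the obstacle-problem solution for the value $f(m_i)$ I read off, which requires (i) $u_i \le 0$ globally and $\le \psi = 0$, automatically true by construction, and (ii) $-\Delta u_i + u_i - f(m_i) \ge 0$ on the coincidence set $\omega_i^c$, i.e. $-f(m_i) \ge 0$ there since $u_i \equiv 0$ there — so I need to define $f(m_i)$ to be nonpositive... wait, the obstacle problem wants $\max(-\Delta u + u - f, u) = 0$, so on $\{u=0\}$ we need $-\Delta u + u - f \le 0$, i.e. $f \ge 0$ there, which is consistent with the $(SOSMFG)$ requirement. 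This sign is the delicate point and must be checked carefully when choosing the interpolating $f$; the rest is routine ODE computation and a density/continuity argument to place $f$ in the assumed class $C(L^2(\Omega), L^2(\Omega))$. I would also verify that the constructed $f$ can be taken bounded (or at least with the needed regularity) so that the pair $(u_i, m_i)$ genuinely solves $(SOSMFG)$ in the sense used elsewhere in the paper, and then conclude that $(u_1, m_1) \ne (u_2, m_2)$ are two solutions, contradicting uniqueness.
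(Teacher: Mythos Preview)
Your approach is viable in principle but far more elaborate than the paper's. The paper sidesteps all free-boundary bookkeeping by choosing the two solutions to be the \emph{extremal} ones: the trivial pair $(0,0)$ (continuation set empty) and a pair $(u^*,m^*)$ with $u^*<0$ everywhere (continuation set all of $\Omega$). Concretely, let $m^*$ solve $-\Delta m^*+m^*=\rho$ in $\Omega$ with zero boundary data, set $E(m)=\int_\Omega |x|\,m(x)\,dx$, and define the nonlocal affine cost $f(x,m)=1-2E(m)/E(m^*)$. Then $f(0)\equiv 1>0$, so the obstacle problem with source $f(0)$ forces $u\equiv 0$ and $(0,0)$ solves $(SOSMFG)$; while $f(m^*)\equiv -1<0$, so the linear solution $u^*$ of $-\Delta u^*+u^*=-1$ is strictly negative by the maximum principle, the contact set is empty, and $(u^*,m^*)$ is a second solution.

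Compared with your route, the paper needs no $H^2$-matching of $u_i$ across $\partial\omega_i$, no sign check on $\omega_i^c$, and no interpolation of $f$ between two prescribed values: since neither solution has a genuine free boundary, all of that collapses. Your construction would produce two solutions with nontrivial coincidence sets, which is a sturdier counterexample in spirit, but it carries real verification overhead. One side remark: your claim that the construction ``forces $\int_\Omega (f(m_1)-f(m_2))(m_1-m_2)<0$'' is not something you need to engineer or check --- once you exhibit two distinct solutions, non-monotonicity of $f$ follows a posteriori from the uniqueness theorem, so that step can be dropped.
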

\begin{proof}
Once again we denote by $m^* \in H^1_0(\Omega)$ the unique solution of 

\[
-\Delta m^* + m^* = \rho \text{ in } \Omega
\]

We note $E(m) = \int_{\Omega}|x|m(x)dx$. We define $f(m)$ by the following equation for $m \in L^2(\Omega)$ :
\[
\forall x \in \Omega, f(x,m) = -2 \frac{E(m)}{E( m^*)} + 1
\]

We observe that $f(m^*) = -1 $ and $f(0) = 1$. It is clear that (0,0) is a solution of $(SOSMFG)$. Now if we denote by $u^*$ the solution of $(1)$ with $m = m^*$, it is clear by the maximum principle that the contact zone with the obstacle $\{u = 0 \}$ is empty and thus that $(u^*, m^*)$ is also a solution of $(SOSMFG)$.
\end{proof}

We take advantage of this collection of remarks around this new system to present the fact that uniqueness of solutions does not hold in general, even if $f$ is strictly monotone. However, we are not interested in giving too much detail about the case in which the obstacle $\psi$ depends on $m$, as it is the subject of the next part.

\begin{Prop}
For all strictly monotone $f$, there exists an obstacle $\psi$ such that there is no uniqueness of solutions. 
\end{Prop}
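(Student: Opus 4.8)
The plan is to mimic the non-uniqueness construction of Proposition 2.2, but now exploiting the freedom in the obstacle $\psi$ rather than in $f$. Since $f$ is given and strictly monotone, I cannot simply engineer $f(0)>0$ and $f(m^*)<0$; instead I want to produce two distinct equilibria by choosing $\psi$ cleverly. The natural idea is to force one equilibrium to be the "everyone leaves immediately" state and another to be a genuine interior state. Concretely, let $m^* \in H^1_0(\Omega)$ be the solution of $-\Delta m^* + m^* = \rho$ in $\Omega$, and let $u^*$ be the solution of the obstacle problem $\max(-\Delta u^* + u^* - f(m^*), u^* ) = 0$ in $\Omega$. If I can choose $\psi$ so that simultaneously (i) $\psi \geq 0$ everywhere with $\psi \equiv 0$ making $(0,0)$ a solution in the regime where $f(0) \geq 0$ fails to hold — which it may — I instead tune $\psi$ to pin down the contact set.

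So the first step is to analyze $(0,0)$: for $(0,0)$ to solve the system with obstacle $\psi$, I need $\max(-f(0) - ?, -\psi(0)) = 0$; since $u \equiv 0$, the obstacle condition $u \le \psi$ forces $\psi(0) \ge 0$, and the PDE part forces $-f(x,0) \le 0$ on the (empty) continuation set, which is automatic, while on the contact set $\{u=0\}=\Omega$ we only need $u = \psi(m) $ is not required — we need $u \le \psi(m)$ and the equation to hold in the viscosity/variational obstacle sense, i.e. $\max(-\Delta u + u - f(0), u - \psi(0)) = 0$. With $u \equiv 0$ this is $\max(-f(x,0), -\psi(x,0)) = 0$, so I need $\psi(x,0) \ge 0$ and $f(x,0) \ge 0$ pointwise, OR $\psi(x,0) = 0$ at points where $f(x,0)<0$. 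The cleanest choice: pick $\psi$ with $\psi(\cdot, 0) \equiv 0$, so that $(0,0)$ is a solution regardless of the sign of $f(0)$. The second step is to build a second solution: take the value function $w$ solving the obstacle problem with source $f(m^*)$ and obstacle $\psi(m^*)$, i.e. $\max(-\Delta w + w - f(x,m^*), w - \psi(x,m^*)) = 0$; I want to choose $\psi(\cdot, m^*)$ large enough (e.g. a big positive constant, or $\psi(\cdot,m^*) \equiv C$ with $C$ large) that the contact set $\{w = \psi(m^*)\}$ is empty, hence $w$ solves $-\Delta w + w = f(x, m^*)$ with $w = 0$ on $\partial\Omega$, and then check that on $\{w < \psi(m^*)\} = \Omega$ the measure $m^*$ indeed satisfies $-\Delta m^* + m^* = \rho$ with $m^* = 0$ on the (empty) complement. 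This makes $(w, m^*)$ a second solution, distinct from $(0,0)$ because $m^* \neq 0$ (as $\rho \geq 0$, $\rho \not\equiv 0$ — I should assume $\rho \not\equiv 0$, which is implicit in the setup being nontrivial).

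The interpolation between these two prescribed values is the third step: I have only specified $\psi$ at the two arguments $0$ and $m^*$ in $L^2(\Omega)$, and I must extend it to a continuous map on all of $L^2((0,T),L^2(\Omega))$ (or $L^2(\Omega)$ in the stationary case) valued in the required Sobolev space, while preserving $\psi(0) = 0$ and $\psi(m^*) = C$. Since these are just two points, a standard construction works: e.g. $\psi(m) := C \cdot \phi\!\left( \frac{\|m - 0\|}{\|m^* - 0\|} \wedge 1 \right)$ composed with a fixed smooth spatial profile, or more simply a continuous scalar functional $\lambda(m)$ with $\lambda(0)=0$, $\lambda(m^*)=1$, times a fixed function $h \in H^1_0 \cap H^2$ with $h \geq C$ large — one can take $\lambda(m) = \frac{\langle m, m^* \rangle}{\|m^*\|^2}$ clipped to $[0,1]$, which is continuous, or better $\lambda(m) = \frac{E(m)}{E(m^*)} \wedge 1 \vee 0$ reusing the functional $E$ from the previous proof. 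The main obstacle I anticipate is not any single hard estimate but rather the bookkeeping: ensuring the extended $\psi$ genuinely lands in the regularity class demanded in the Assumptions (continuity into $H^1_0 \cap H^2$, and in the time-dependent formulation also $H^1$ in time), and verifying carefully that with this $\psi$ the pair $(0,0)$ still satisfies the contact/PDE conditions in the precise sense used for $(SOSMFG)$ — in particular that $m=0$ is consistent with "$m = 0$ on $\{u = \psi(m)\}$" when $u = \psi(m) = 0$ on all of $\Omega$, which is the delicate point since the continuation set is then empty yet we still want $(0,0)$ admissible. Once the definition of solution is pinned down (as the excerpt defers this), this should reduce to the same maximum-principle argument used in Proposition 2.2: the large obstacle makes the contact set empty for the $m^*$-equilibrium, and the zero obstacle makes $(0,0)$ trivially admissible.
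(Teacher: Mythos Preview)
Your construction has a genuine gap at the first equilibrium. You correctly compute that with $u\equiv 0$ and $\psi(\cdot,0)\equiv 0$ the obstacle equation becomes $\max(-f(x,0),0)=0$, which forces $f(x,0)\ge 0$ for every $x$. You then assert that this choice makes $(0,0)$ a solution ``regardless of the sign of $f(0)$'', directly contradicting what you just derived: if $f(\cdot,0)<0$ somewhere, then $u\equiv 0$ does \emph{not} solve the obstacle problem (the max is strictly positive there). Since $f$ is an arbitrary strictly monotone map and you have no control over the sign of $f(0)$, the construction fails for many $f$. The alternative you float, ``$\psi(x,0)=0$ at points where $f(x,0)<0$'', does not help either, for the same arithmetic reason.

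The paper sidesteps this entirely by not insisting that the value function be zero. It takes $u_*\in H^1_0\cap H^2$ solving $-\Delta u_*+u_*=f(0)$ and sets $\psi(0)=u_*$; then both $-\Delta u_*+u_*-f(0)=0$ and $u_*-\psi(0)=0$ hold simultaneously, so the obstacle equation is satisfied with no sign hypothesis on $f(0)$, and $(u_*,0)$ is a solution (the contact set is all of $\Omega$, where $m=0$ trivially). The same device is applied at $m^*$: with $u^*$ solving $-\Delta u^*+u^*=f(m^*)$ one takes $\psi(m^*)=u^*$, and the explicit pointwise interpolation $\psi(m)=\dfrac{m}{m^*}\,u^*+\dfrac{m^*-m}{m^*}\,u_*$ (using $m^*>0$ by the strong maximum principle) connects the two prescribed values. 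Your idea for the second equilibrium---take $\psi(m^*)$ large so that the contact set is empty and the free solution $w$ of $-\Delta w+w=f(m^*)$ pairs with $m^*$---is a perfectly reasonable alternative for that half of the argument, and in fact makes the Fokker--Planck verification cleaner. But it does not rescue the first half; the idea you are missing is to tailor the obstacle to \emph{equal} the unobstructed value function at the target measure, rather than to force $u=0$.
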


\begin{proof}
Once again we denote by $m^* \in H^1_0(\Omega)$ the unique solution of 
\[
-\Delta m^* + m^* = \rho  \text{ in } \Omega
\]

which is strictly positive by the strong maximum principle. We define $u^* \in H^1_0(\Omega)\cap H^2(\Omega)$ by the unique solution of 
\[
-\Delta u^* + u^* = f(m^*) \text{ in } \Omega
\]

and $u_* \in H^1_0(\Omega)\cap H^2(\Omega)$ by the unique solution of 

\[
-\Delta u_* + u_* = f(0)
\]

Now define $\psi$ by the following :

\[
\psi(m) = (u^*)\frac{m}{m^*} + \frac{m^* - m}{m^*}(u_*)
\]

It is then easy to verify that both $(u^*, m^*)$ and $(u_* , 0)$ are solutions of $(SOSMFG)$.

\end{proof}
\begin{Rem}
We can notice that here $\psi$ is monotone. We shall see later that in order to have uniqueness of solutions, we have to make an assumption which is somehow related to the anti-monotonicity of $\psi$.
\end{Rem}

We now present an example of non existence of solutions for $(SOSMFG)$. It is very general in its construction and the reader could easily understand how it could be adapted for different models.

\begin{Prop}
There exists $f$ such that there is no solution for the system $(SOSMFG)$.
\end{Prop}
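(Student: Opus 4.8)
The idea is to design $f$ so that there is an inescapable contradiction between the two requirements of $(SOSMFG)$: that $m$ be zero exactly on the contact set $\{u=0\}$, and that $u$ solve the obstacle problem with source $f(m)$. The proof from Proposition 4 showed that when $f$ is merely continuous one can have many solutions; here we instead exploit the freedom in $f$ to make the ``fixed point'' of the coupling impossible. The natural construction is to make $f(m)$ point in the ``wrong'' direction relative to $m$: roughly, when $m$ is large (so that the players should be discouraged and stop, forcing the contact set to be large and hence $m$ small), we want $f(m)$ to be very negative (which makes $u$ strictly negative, so the contact set is empty and $m=m^*$ stays large); and when $m$ is small, we want $f(m) \geq 0$ on a large set, forcing $u$ to touch the obstacle there, hence forcing $m$ to vanish there, hence $m$ small is consistent but then $f$ jumps back. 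One sets up the map $\Phi: m \mapsto \tilde m$, where $u$ solves $\max(-\Delta u + u - f(m), u)=0$ and $\tilde m$ solves $-\Delta \tilde m + \tilde m = \rho$ on $\{u<0\}$, $\tilde m = 0$ elsewhere; a solution of $(SOSMFG)$ is exactly a fixed point of $\Phi$, and the goal is to choose $f$ so $\Phi$ has none.

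\textbf{Key steps.}
First, introduce $m^*$, the solution of $-\Delta m^* + m^* = \rho$ in $\Omega$, and some scalar functional like $E(m) = \int_\Omega |x|\, m$, normalized so that $E(m^*)$ and $E(0)=0$ are the reference values; the state of the system will essentially be tracked by the scalar $t := E(m)/E(m^*) \in [0,1]$ for the $m$'s that actually arise (which lie ``between'' $0$ and $m^*$ in a suitable monotonicity sense, since the continuation set is a subset of $\Omega$). Second, define $f$ so that its effect on $u$, and hence on the contact set and hence on $E(\Phi(m))$, produces a scalar map $g:[0,1]\to[0,1]$, $t \mapsto E(\Phi(m))/E(m^*)$, which is discontinuous in a way that has no fixed point — for instance $g(t) = 1$ for $t$ small and $g(t) = 0$ for $t$ large, with a jump in between. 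Concretely: choose $f(m)$ to equal a fixed strictly positive constant on all of $\Omega$ when $E(m)$ is below a threshold, and a fixed strictly negative constant when $E(m)$ is above it, interpolating continuously on a thin transition layer. Third, check the two extreme behaviors: if $f(m) < 0$ everywhere then by the maximum principle the solution $u$ of the obstacle problem has empty contact set, so $\Phi(m) = m^*$ and $E(\Phi(m)) = E(m^*)$; whereas if $f(m) > 0$ everywhere then $u \equiv 0$ solves the obstacle problem (it is the only solution), so the continuation set $\{u<0\}$ is empty, $\Phi(m)=0$, and $E(\Phi(m))=0$. Fourth, track what happens on the transition layer and show that no value of the parameter is fixed — i.e. there is no $m$ with $E(m)=E(\Phi(m))$ — which gives non-existence.

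\textbf{Main obstacle.}
The delicate point is the transition region: one must rule out a fixed point where $E(m)$ sits exactly at the threshold, with $\{u<0\}$ a proper nonempty subset of $\Omega$ balancing the equation. This requires controlling $E(\Phi(m))$ as a function of $E(m)$ well enough to see it never crosses the diagonal — in particular handling the fact that $\Phi$ is genuinely set-valued in behaviour near the threshold and that the contact set depends on $f(m)$ in a monotone but possibly non-strict way. The clean way around this is to choose the transition of $f$ to be a genuine jump (or to pick the two constants and the threshold so that the ``interpolated'' cost $f(m)$ still yields either empty or full contact set — e.g. by making the positive constant large enough that $f(m)\ge 0$ forces $u\equiv 0$ as soon as $f(m)$ is nonnegative somewhere with the right sign structure, and the negative constant negative enough that strict negativity persists through the layer). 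Making this dichotomy airtight — ``for every admissible $m$, either the contact set is empty or $m$ is forced to vanish, and these are mutually exclusive with $E(m)=E(\Phi(m))$'' — is the heart of the argument; the rest is the soft machinery (maximum principle, uniqueness for the obstacle problem, continuity of $m \mapsto u$) already recorded in the preliminary section.
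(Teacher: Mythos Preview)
Your construction has the signs reversed, and as written it produces solutions rather than ruling them out. With your concrete choice --- $f(m)$ a positive constant when $E(m)$ is small and a negative constant when $E(m)$ is large --- take $m=0$: then $E(0)=0$ is below threshold, $f(0)>0$ everywhere, the obstacle problem forces $u\equiv 0$, the continuation set $\{u<0\}$ is empty, and the requirement $m=0$ on $\{u=0\}=\Omega$ is trivially met. So $(0,0)$ solves $(SOSMFG)$. Symmetrically, $m=m^*$ gives $f(m^*)<0$, hence $u<0$ everywhere by the maximum principle, the contact set is empty, and $m^*$ solves $-\Delta m+m=\rho$ on all of $\Omega$; so $(u,m^*)$ is a second solution. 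You have in fact rebuilt the non-uniqueness example of Proposition~1.1, not a non-existence example. (Your ``plan'' paragraph and your sentence ``for instance $g(t)=1$ for $t$ small and $g(t)=0$ for $t$ large'' are mutually inconsistent; only the latter orientation is fixed-point-free.) Even after flipping the signs, you still owe the reader the transition-layer argument you flag as the ``main obstacle''; since your $f(m)$ is constant in $x$, this reduces to the observation that for a constant source $c$ the obstacle solution is either identically $0$ (if $c\ge 0$) or strictly negative on $\Omega$ (if $c<0$), so $\Phi(m)\in\{0,m^*\}$ for \emph{every} $m$ --- but this must be stated, not left open.

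The paper takes a different and sharper route: it exhibits a \emph{strictly monotone}, local $f$ with no solution, namely $f(x,m)=-\Delta u^*(x)+u^*(x)+m(x)-m^*(x)$ for a fixed smooth $u^*\le 0$ vanishing only at a single interior point $x_0$ and on $\partial\Omega$. Then $u^*$ is the obstacle solution with source $f(m^*)$, but $m^*(x_0)>0$ by the strong maximum principle, so $(u^*,m^*)$ fails the constraint $m=0$ on $\{u=0\}$. Any other candidate $(u,m)$ has $m\le m^*$ with $m\ne m^*$; strict monotonicity gives $f(m)<f(m^*)$, and the strong maximum principle forces $u<u^*\le 0$ everywhere, so the contact set is empty and hence $m=m^*$ --- a contradiction. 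This is both shorter and stronger than your scheme: it shows that monotonicity of $f$, which secures uniqueness, does not secure existence, whereas your $f$ (depending on $m$ only through the scalar $E(m)$) is neither monotone nor anti-monotone.
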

\begin{proof}
We define by $m^*$ the unique solution of the equation  :

\[
\begin{cases}
- \Delta m + m = \rho \text{ in } \Omega \\
m = 0 \text{ on } \partial \Omega
\end{cases}
\]

We then choose a negative smooth function ($C^2$) denoted by $u^*$ which vanishes only on  $\{ x_0 \}$ and on the boundary of the domain, where $x_0 \in \Omega$. We then define $f$ on $L^2(\Omega)$ by

\[
f(x,m) =  - \Delta u(x) + u(x) + m(x) - m^*(x), \forall x \in \Omega
\]

By construction

\[
\begin{cases}
- \Delta u^* + u^* = f(m^*), \forall x \in \Omega\\ 
u \leq 0
\end{cases}
\]

Hence, $u^*$ satisfies the  obstacle problem with source $f(m^*)$. But since $m^*$ is strictly positive on $\Omega$ by the strong maximum principle, $m^*$ does not satisfy $m^* = 0 $ on $\{ u^* = 0 \}$. Thus the couple $(u^*,m^*)$ is not a solution of $(SOSMFG)$. Suppose that there exists a solution $(u,m)$ of $(OSMFG)$, then necessarily $m < m^*$ (i.e. $m\leq m^*$ and $m \ne m^*$). Then recalling the strict monotonicity of $f$ and the strong maximum principle we obtain that $u < u^*$ everywhere on the domain. So the contact zone $\{ u = 0 \}$ is empty. This contradicts the fact that $m < m^*$, thus we have proven that there is no solution to $(SOSMFG)$ in this case. 

\end{proof}

\begin{Rem}
One can object to this conclusion that imposing that $m$ vanishes (in some sense) on $\{u^* = 0 \}$ is  very restrictive, regarding the problem we are trying to model, because this set has a $0$ Lebesgue measure. One can think that we should not have any constraint in this case. The interested reader could easily note that if $u^*$ is such that it vanishes only on a small ball around $x_0$, then we could also have proven non existence in this case.
\end{Rem}

We have now proven that a solution of the system may not exist if $f$ is not anti-monotone and that it may not be unique if $f$ is not monotone. This observation leads us to consider a relaxed notion of solutions of $(SOSMFG)$. 

\section{Towards the good notion of solutions}
We now present the notion of mixed solutions of $(SOSMFG)$ for which we can establish better results of existence than for the notion of solution we used in the previous section. We give two different approaches for $(SOSMFG)$, each of them leads to this notion of mixed solution. The first one is an optimal control interpretation of $(SOSMFG)$. The second one is a penalized version of the problem.

\subsection{The optimal control interpretation}
We here assume that the dependence of $f$ in $m$ is local, meaning that
\[
\forall m \in L^2(\Omega), x \in \Omega, f(x,m) = f(x,m(x))
\]

We then suppose that there exists $\mathcal{F}$ such that :

\[
\forall (x,p) \in \Omega \times \mathbb{R}, \frac{\partial \mathcal{F}}{\partial p}(x,p) = f(x,p)
\]
We assume that $\mathcal{F}(x, \cdot) \in C^1(\mathbb{R})$ and that $\mathcal{F}$ is continuous as seen as a functional from $H^1_0(\Omega)$ into $L^1(\Omega)$ (which is of course compatible with the fact that $f \in C^0(L^2(\Omega))$). 

We now introduce an optimal control problem using $\mathcal{F}$ which leads to $(SOSMFG)$. We start by recalling the optimal control interpretation for the classical MFG system (1). This approach was presented in [19] and has been well studied since (see [3,6,8] for example). Each time the authors of those articles link the system (1) with both an optimal control of a Fokker-Planck equation and an optimal control of a Hamilton-Jacobi-Bellman equation. Here we only present (formally) the link with the Fokker-Planck equation. For each control $\alpha$, we define $m$ as the solution of the Fokker-Planck equation :

\[
\begin{cases}
\partial_t m - \Delta m - div( \alpha m) = 0 \text{ in } ]0,T[ \times \Omega \\
m(0) = m_0 \text{ in } \Omega
\end{cases}
\]

and we introduce the following problem 

\[
\inf_{\alpha} \big\{ \int_0^T \int_{\Omega} \mathcal{F}(m)  + L(x, \alpha)m \big\}
\]

Where $L$ is the Fenchel conjugate of the Hamiltonian $H$ with respect to the $p$ variable. Then it has been proved ([6] for example) that the infimum is reached when $\alpha(t,x) = -D_pH(x, \nabla u(t,x))$, where $(u,m)$ is the solution of $(1)$. Our aim is now to prove a similar result for $(SOSMFG)$. We start by a remark : in the optimal control problem for the players, the term $-D_pH(x, \nabla u(t,x))$ is an optimal choice to minimize their cost (it is their best answer), if $(u,m)$ is a solution of $(1)$. Thus, the problem of controlling the Fokker-Planck equation is interpreted as prescribing a behavior for the players (it is the $\alpha$, the control) and then minimizing a quantity depending on the repartition of the players ($m$) induced by the prescribed behavior ($\alpha$). For the optimal stopping case, we can describe a behavior as an exit set in $\Omega$, on which the players leave the game. Thus, we can search for an optimal control problem where the control $\alpha$ is now a set and $m$ (the associated repartition of players) satisfies the equation associated with this behavior : 
\[
\begin{cases}
 - \Delta m + m  = \rho \text{ in } {\alpha} \\
m = 0 \text{ on } \partial \alpha
\end{cases}
\]

We then minimize the following 
\[
\inf_{\alpha}  \big\{ \int_{\Omega} \mathcal{F}(m)   \big\}
\]

where the infimum is taken over the open sets of $\Omega$. As it is well known minimization problems over a collection of open sets are difficult in general. This is why, we shall work on a relaxed version of the problem. We make two remarks : for all $\alpha$, $m$ is positive by the maximum principle and the following inequality is satisfied in the sense of distributions 
\[
- \Delta m + m \leq \rho
\]

So a possible relaxation for the previous problem is
 
 \begin{equation}
 \inf_{m \in \mathcal{H}}  \big\{\int_{\Omega} \mathcal{F}(m)   \big\}
 \end{equation}

where $\mathcal{H} = \{ m \in H^1_0(\Omega), m \geq 0,  - \Delta m + m \leq \rho \}$ in which the second inequality holds in the sense of distributions.

\begin{Theorem}
Assume $\mathcal{F}$ is strictly convex, then the previous relaxed problem admits a unique solution. Moreover, any minimizer $m$ of the problem satisfies the Euler-Lagrange optimal conditions :

\[
\forall m' \in \mathcal{H},  \int_{\Omega} f(m)(m' - m) \geq 0
\]
\end{Theorem}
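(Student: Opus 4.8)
The plan is to establish the three claims in order: the constraint set $\mathcal{H}$ is suitable for the direct method, the functional is strictly convex and coercive so a unique minimizer exists, and then derive the variational inequality from convexity of $\mathcal{H}$.

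First I would check that $\mathcal{H}$ is a nonempty, closed, convex subset of $H^1_0(\Omega)$. Nonemptiness is clear since $0 \in \mathcal{H}$. Convexity is immediate: the constraints $m \geq 0$ and $-\Delta m + m \leq \rho$ (in the sense of distributions, i.e. $\int_\Omega (\nabla m \cdot \nabla \varphi + m\varphi) \leq \int_\Omega \rho \varphi$ for all nonnegative $\varphi \in C_c^\infty(\Omega)$) are both affine inequalities, hence preserved under convex combinations. For closedness, one notes that $H^1_0$-convergence of a sequence $(m_n) \subset \mathcal{H}$ passes to the limit in the distributional inequality (since $\nabla m_n \to \nabla m$ in $L^2$ and $m_n \to m$ in $L^2$) and in the pointwise inequality $m \geq 0$ (after passing to an a.e.-convergent subsequence); in fact $\mathcal{H}$ is weakly closed, being convex and strongly closed. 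Moreover $\mathcal{H}$ is bounded in $H^1_0(\Omega)$: testing $-\Delta m + m \leq \rho$ against $m \geq 0$ itself gives $\|m\|_{H^1_0}^2 \leq \int_\Omega \rho m \leq \|\rho\|_{L^2}\|m\|_{L^2} \leq C\|\rho\|_{L^2}\|m\|_{H^1_0}$, whence $\|m\|_{H^1_0} \leq C\|\rho\|_{L^2}$. So $\mathcal{H}$ is a bounded, convex, weakly closed set.

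Next, existence and uniqueness of the minimizer. The functional $J(m) = \int_\Omega \mathcal{F}(x, m(x))\,dx$ is, by assumption, well-defined and continuous from $H^1_0(\Omega)$ into $\mathbb{R}$ (via continuity $H^1_0 \to L^1$ of $m \mapsto \mathcal{F}(\cdot,m)$). Since $p \mapsto \mathcal{F}(x,p)$ is strictly convex, $J$ is strictly convex on the convex set $\mathcal{H}$. For the direct method I would take a minimizing sequence $(m_n) \subset \mathcal{H}$; by the boundedness just established and reflexivity of $H^1_0(\Omega)$, extract a weakly convergent subsequence $m_n \rightharpoonup m$ with $m \in \mathcal{H}$. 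By the Rellich–Kondrachov theorem $m_n \to m$ strongly in $L^2(\Omega)$, hence (up to a further subsequence) a.e., and by the continuity hypothesis on $\mathcal{F}$ one gets $J(m_n) \to J(m)$ — or, if one prefers to rely only on convexity, $J$ is weakly lower semicontinuous because it is convex and strongly continuous. Either way $m$ is a minimizer. Uniqueness follows from strict convexity of $J$ on the convex set $\mathcal{H}$ in the standard way: two distinct minimizers would make the midpoint strictly better.

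Finally, the Euler–Lagrange condition. Let $m$ be the minimizer and $m' \in \mathcal{H}$ arbitrary. By convexity of $\mathcal{H}$, $m_t := m + t(m'-m) = (1-t)m + tm' \in \mathcal{H}$ for all $t \in [0,1]$, so $t \mapsto J(m_t)$ attains its minimum at $t=0$, giving $\frac{d}{dt}\big|_{t=0^+} J(m_t) \geq 0$. It remains to differentiate under the integral: since $\frac{\partial \mathcal{F}}{\partial p}(x,p) = f(x,p)$ and the dependence is local, a dominated-convergence argument (using that $m, m' \in L^2$, that $f = f(\cdot, m)$ is continuous $L^2 \to L^2$, and convexity of $\mathcal{F}$ which controls the difference quotients) yields $\frac{d}{dt}\big|_{t=0^+} J(m_t) = \int_\Omega f(x,m)(m'-m)\,dx$, hence the stated inequality $\int_\Omega f(m)(m'-m) \geq 0$ for all $m' \in \mathcal{H}$.

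The main obstacle is the last step: justifying the differentiation of $J$ along the segment and identifying the derivative as $\int_\Omega f(m)(m'-m)$. The subtlety is that we only know $\mathcal{F}(x,\cdot) \in C^1$ and continuity of the Nemytskii-type operators between the stated function spaces, not any uniform growth bound; one has to exploit convexity of $p \mapsto \mathcal{F}(x,p)$ to get monotone, hence dominated, difference quotients $\frac{\mathcal{F}(x, m + t(m'-m)) - \mathcal{F}(x,m)}{t}$ as $t \downarrow 0$ (bounded above by $\mathcal{F}(x,m') - \mathcal{F}(x,m) \in L^1$ and below by $f(x,m)(m'-m)$, which lies in $L^1$ since both $f(x,m)$ and $m'-m$ are in $L^2$), and then apply monotone/dominated convergence. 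Everything else is a routine application of the direct method.
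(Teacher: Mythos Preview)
Your proof is correct and follows essentially the same approach as the paper: the direct method on the bounded, weakly closed convex set $\mathcal{H}$ (with the $H^1_0$ bound obtained by testing $-\Delta m + m \leq \rho$ against $m$ itself), weak lower semicontinuity from convexity plus continuity, uniqueness from strict convexity, and the Euler--Lagrange inequality from differentiating along segments in $\mathcal{H}$. The paper's own proof is terser---it calls the Euler--Lagrange verification ``trivial''---whereas your careful justification via monotone difference quotients bounded between $f(x,m)(m'-m)\in L^1$ and $\mathcal{F}(x,m')-\mathcal{F}(x,m)\in L^1$ is a genuine improvement in rigor, but the underlying argument is the same.
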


\begin{proof}
The functional $\mathcal{F}$ is weakly sequentially lower semi continuous (ws lsc) for the topology of $H^1_0(\Omega)$ as it is both continuous and convex. Take a minimizing sequence $(m_n)_{n \in \mathbb{N}}$, by definition,
 
 \[
 - \Delta m_n  + m_n \leq \rho
 \]
 
 hence testing this relation against $m_n$ itself,
 \[
 \int_{\Omega} |\nabla m_n|^2 + \int_{\Omega} m_n^2 \leq \int_{\Omega}\rho m_n \leq ||\rho||_{L^2(\Omega)} ||m_n||_{L^2(\Omega)}
 \]
 
 which implies a bound in $H^1_0(\Omega)$. So there is a subsequence of $(m_n)_{n \in \mathbb{N}}$ which converges toward a limit $m$ in $H^1_0(\Omega)$ for the weak topology. It is easy to check that the limit still belongs to $\mathcal{H}$. Then because $\mathcal{F}$ is ws lsc, we get that $m$ is a minimizer of the problem. And it is obviously unique if $\mathcal{F}$ is strictly convex. Because of the regularity of $\mathcal{F}$, the verification of the Euler-Lagrange inequality is trivial.
\end{proof}

From now on we will denote by $m$ the unique minimizer of this problem and $u$ the solution of 

\[
\begin{cases}
\max(-\Delta u + u - f(m), u) = 0 \text{ in } \Omega \\
u = 0 \text{ on } \partial \Omega
\end{cases}
\]

Our goal is to show that the couple $(u, m)$ is a solution of $(SOSMFG)$ in a certain sense. More precisely we obtain the following :

\begin{Theorem}
$(u, m)$ satisfies :

\[
\begin{cases}
\max(-\Delta u + u - f(m), u) = 0 \text{ in } \Omega \\
 - \Delta m + m=  \rho \text{ on } \{u < 0 \} \\
m = 0 \text{ on } \partial \Omega \\
\int_{ \Omega} f(m)m = \int_{\Omega} u \rho
\end{cases}
\]
\end{Theorem}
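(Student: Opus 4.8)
The plan is the following. The first and third lines of the system hold by the very definitions of $u$ and $m$ ($u$ is defined as the solution of the obstacle problem with source $f(m)$, and $m\in H^1_0(\Omega)$ so $m=0$ on $\partial\Omega$). Hence the real content is the equation $-\Delta m+m=\rho$ on $\omega:=\{u<0\}$ together with the identity $\int_\Omega f(m)m=\int_\Omega u\rho$, and I would obtain both simultaneously by proving the two inequalities $\int_\Omega f(m)m\le\int_\Omega u\rho$ and $\int_\Omega f(m)m\ge\int_\Omega u\rho$: the first using the optimality of $m$, i.e. the Euler--Lagrange inequality of the previous Theorem; the second using the obstacle problem solved by $u$; and then reading off the missing PDE from the equality case of the second inequality. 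Throughout I use two facts coming from $\max(-\Delta u+u-f(m),u)=0$: on $\{u<0\}$ one has $-\Delta u+u=f(m)$, and on $\{u=0\}$ one has $\Delta u=0$ a.e. (since $u\in H^2$ vanishes there), whence $-\Delta u+u=0$ a.e. and therefore $f(m)\ge0$ a.e. on $\{u=0\}$.

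\emph{Upper bound.} Let $\hat m\in H^1_0(\omega)$ solve $-\Delta\hat m+\hat m=\rho$ in $\omega$, extended by $0$ to $\Omega$; by the remarks preceding the definition of $\mathcal H$ one has $\hat m\in\mathcal H$. Testing the Euler--Lagrange inequality with $m'=\hat m$ gives $\int_\Omega f(m)\hat m\ge\int_\Omega f(m)m$. On the other hand, since $\hat m$ vanishes off $\omega$ and $-\Delta u+u=f(m)$ on $\omega$, an integration by parts yields $\int_\Omega f(m)\hat m=\int_\Omega(-\Delta u+u)\hat m=\langle-\Delta\hat m+\hat m,u\rangle$; writing $-\Delta\hat m+\hat m=\rho-\nu$ with $\nu\ge0$ concentrated on $\Omega\setminus\omega$, where $u=0$, this equals $\int_\Omega u\rho$. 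Thus $\int_\Omega f(m)m\le\int_\Omega u\rho$.

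\emph{Lower bound.} Write $-\Delta m+m=\rho-\mu$ with $\mu\ge0$ (this is precisely the constraint $m\in\mathcal H$). Splitting $\Omega$ into $\{u<0\}$ and $\{u=0\}$ and using $f(m)\ge0$, $m\ge0$ and $-\Delta u+u=0$ a.e. on the latter, I get $\int_\Omega f(m)m=\int_{\{u<0\}}(-\Delta u+u)m+\int_{\{u=0\}}f(m)m\ge\int_\Omega(-\Delta u+u)m=\langle-\Delta m+m,u\rangle=\int_\Omega u\rho-\int_\Omega u\,d\mu\ge\int_\Omega u\rho$, the last step because $u\le0$ and $\mu\ge0$. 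Combined with the upper bound this gives $\int_\Omega f(m)m=\int_\Omega u\rho$. Tracing back the chain of equalities then forces $\int_\Omega(-u)\,d\mu=0$, and since $-u>0$ precisely on $\omega$, we conclude $\mu(\omega)=0$, i.e. $-\Delta m+m=\rho$ on $\{u<0\}$.

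The one technical point requiring care is the legitimacy of the distributional pairings between the nonnegative measures $\mu,\nu$ — which belong to $H^{-1}(\Omega)$ and hence do not charge sets of zero capacity — and the function $u$: one should use the quasi-continuous representative $\tilde u$ of $u$, interpret $\omega$ as the quasi-open set $\{\tilde u<0\}$, and note that $\langle\mu,u\rangle=\int\tilde u\,d\mu$, so that $\int(-\tilde u)\,d\mu=0$ really does force $\mu(\omega)=0$ (and similarly $\nu$ is concentrated on $\{\tilde u=0\}$). In low dimension $u\in H^2$ is genuinely continuous by Sobolev embedding and this is transparent; everything else is a routine integration by parts.
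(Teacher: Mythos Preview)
Your proof is correct and genuinely different in structure from the paper's. The paper proceeds in the opposite order: it first establishes $-\Delta m+m=\rho$ on $\{u<0\}$ by building a competitor $m_1$ as the weak limit of penalized problems $-\Delta m_1^\epsilon+m_1^\epsilon+\epsilon^{-1}\mathbb{1}_{\{u=0\}}(m_1^\epsilon-m)^+=\rho$ and arguing that the Euler--Lagrange inequality would be strict if the PDE failed; it then builds a second competitor $m_2$ (limit of another penalized sequence, essentially your $\hat m$) to deduce $\int_{\{u=0\}}f(m)m=0$, and only then recovers the integral identity. You instead go straight for the integral identity via a two-sided bound and read the PDE off from the equality case $\langle\mu,u\rangle=0$. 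Your route is more economical: a single explicit competitor $\hat m$ replaces two penalized limiting constructions, and no subsequence extraction is needed. The paper's route is perhaps more constructive in spirit and makes the role of the Euler--Lagrange condition at each step slightly more visible. Both arguments rest on the same three ingredients (the Euler--Lagrange inequality, the dichotomy $-\Delta u+u=f(m)$ on $\{u<0\}$ versus $-\Delta u+u=0$ a.e.\ on $\{u=0\}$, and the subsolution property of admissible densities encapsulated in Lemma~1.1), so the difference is really one of organization. Your closing remark about quasi-continuous representatives and capacity is a point the paper does not address; it is the right way to make the pairings $\langle\mu,u\rangle$, $\langle\nu,u\rangle$ rigorous in high dimension.
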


\begin{proof}
For all $m'$ in $\mathcal{H}$, we compute

\[
\begin{aligned}
\int_{\Omega} f(m)(m' - m) & =  \int_{\{ u < 0\}} f(m)(m' - m) + \int_{ \{ u = 0 \}} f(m)(m' - m)\\
& =  \int_{\{ u < 0\}} ( - \Delta u + u)(m' - m) +  \int_{ \{ u = 0 \}} f(m)(m' - m)\\
& =  \int_{\Omega} ( - \Delta u + u)(m' - m) + \int_{ \{ u = 0 \}} f(m)(m' - m)
\end{aligned}
\]

Where the last equality holds true because $u  \in H^2(\Omega)$. Take the sequence $(m_1^{\epsilon})_{\epsilon > 0}$ defined as solutions of 

\[
\begin{cases}
- \Delta m_1^{\epsilon} + m_1^{\epsilon} + \frac{1}{\epsilon} \mathbb{1}_{\{ u = 0 \}} (m_1^{\epsilon} - m)^+ = \rho \text{ in } \Omega\\
m_1^{\epsilon}  = 0 \text{ on } \partial \Omega
\end{cases}
\]

For all $\epsilon > 0$, $m_1^{\epsilon} \in H^2(\Omega) \cap H^1_0(\Omega)$. Next, observe that in the sense of distributions

\[
-\Delta m_1^{\epsilon} + m_1^{\epsilon} \leq \rho
\]

\[
m_1^{\epsilon} \geq 0 \text{ in } \Omega
\] 

Hence, extracting a subsequence  if necessary, $(m_1^{\epsilon})_{\epsilon}$ converges weakly to a limit $m_1$ in $\mathcal{H}$ such that $m_1 \leq m$ on $\{ u = 0 \}$ and

\[
\int_{\Omega} (-\Delta u + u)m_1 = \int_{\Omega} u \rho
\]

Thus we can write 

\[
 \int_{\Omega} f(m)(m_1 - m) = \int_{\Omega} u \rho + \int_{\Omega} ( - \Delta u + u)(- m) +  \int_{ \{ u = 0 \}} f(m)(m_1 - m)
\]

Now, note that because $u$ satisfies an obstacle problem 

\[
f(m) \geq 0 = -\Delta u + u \text{ on } \{u = 0 \}
\]

Using this remark and lemma 1.1 we deduce

\[
\int_{\Omega} f(m)(m_1 - m) \leq 0
\]

And the inequality is strict if 
\[
-\Delta m + m \ne \rho \text{  on } \{u < 0 \}
\]

Thus, because of the Euler-Lagrange conditions of optimality it follows that
\begin{equation*}
- \Delta m + m= \rho \text{ on } \{u < 0\}
\end{equation*}

Now we define a sequence $(m_2^{\epsilon})_{\epsilon> 0}$ as the solutions of

\[
\begin{cases}
 - \Delta m_2^{\epsilon} + m_2^{\epsilon} + \frac{1}{\epsilon} \mathbb{1}_{\{u = 0 \}} m_2 = \rho \text{ in } \Omega \\
m_2^{\epsilon} = 0 \text{ on } \partial \Omega
\end{cases}
\]

Once again, extracting a subsequence if necessary, $(m_2^{\epsilon})_{\epsilon}$ converges weakly to a limit $m_2 \in \mathcal{H}$ and we deduce  

\[
\int_{\Omega} f(m)(m_2 - m) =  \int_{\{u = 0\}} f(m)(m_2 - m) = -\int_{\Omega}f(m) m
\]

because both $m$ and $m_2$ satisfy 

\[
-\Delta m' + m' = \rho \text{ in } \{u<0 \}
\]

Remark that because of the Euler-Lagrange conditions, we obtain that

\[
\int_{\{u = 0\}} f(m)m \leq 0
\]

But because of the obstacle problem satisfied by $u$, this quantity is also positive. Hence 
\begin{equation}
\int_{\{u = 0\}} f(m)m = 0
\end{equation}

Now we only have to remark that because $(6)$ holds true,

\[
\begin{aligned}
\int_{ \Omega} f(m)m & = \int_{\{u < 0 \}} f(m)m\\
& = \int_{\{u < 0 \}} (-\Delta u + u)m\\
& = \int_{\Omega} u \rho
\end{aligned}
\]

\end{proof}

We now introduce the following definition :

\begin{Def}
A pair $(u,m) \in (H^1_0(\Omega)\cap H^2(\Omega)) \times H^1_0(\Omega)$ is a mixed solution of $(SOSMFG)$ if :
\begin{itemize} 
\item  $\max(-\Delta u + u - f(m), u) = 0$ in $\Omega$
\item  $ - \Delta m + m = \rho$ on $\{u < 0 \}$ in $\mathcal{D}'(\Omega)$
\item $ -\Delta m + m \leq \rho$ in $\mathcal{D}'(\Omega)$
\item $\int_{\{ u =0 \}} f(m)m = 0$
\end{itemize}
\end{Def}

Obviously if $m$ is a minimizer of $(5)$ and $u$ is the solution of the obstacle problem with source $f(m)$, then $(u,m)$ is a mixed solution of $(SOSMFG)$. We present here in which extend it is natural to adopt this definition. This discussion is quite formal and we begin by taking a mixed solution $(u,m)$ of $(SOSMFG)$. Let us remark that formally there exits a potential $V$ such that 

\[
- \Delta m + m + Vm = \rho \text{ in } \Omega
\]

with the convention that $V$ equals $\infty$ when $m$ equals $0$. Because $ - \Delta m + m \leq \rho$, $V \geq 0$. The classical interpretation for such a term in mathematical modeling is usually a death rate. It is more appropriate to talk about a leaving rate here as we are interested in an optimal stopping game. We can now describe a Nash equilibrium of the MFG using this potential $V$ : in the zone $\{ V = 0 \}$ it is optimal for the players to stay in the game ; in the zone $\{V = \infty\}$ it is optimal for the players to leave the game ; the zone $\{ 0 < V < \infty \}$ describes an indifference region where it is both optimal to leave and to stay and where players do leave with a non constant leaving rate given by $V$. We can remark we can still have a Nash equilibrium even if the players behave differently in the same situation, as soon as the way with which they choose between the options is random and that the law of this choice is the same for all the players. A Nash equilibrium of the MFG is then obtained when the leaving rate $V$ of the players in this region is such that $0= - \Delta u + u = f(m)$. Because the first equality comes from the fact that it is optimal to leave, and the second one from the fact that it is optimal to stay.\\
It justifies our choice of the terminology mixed solution, as it corresponds to the players playing in mixed strategies. Remark that here a strategy for the players is not just an exit set $\{ u = 0\}$ but also a leaving rate $V$ on this set which characterizes the probability they are playing as a strategy.\\
\\
From a game theory point of vue it is very common to have existence of equilibria in mixed strategies but not in pure strategies. But from a MFG point of vue, it is quite a surprise that Nash equilibria for the game cannot be found in pure strategies for the optimal stopping problem. We recall that in the classical MFG setting, under the monotonicity assumptions on the costs $f$ and $g$, there exists a unique solution for the system $(1)$, which is interpreted as a Nash equilibrium in pure strategies.\\
\\
The notion of mixed solutions of MFG we just presented is highly related to the notion of weak solutions which is introduced in [12]. In [12], the measure of stopping time is a weak solution, when it satisfies a fixed point property in a weak sense. It can be seen as letting the players play in mixed strategies. The authors of this article oppose their notion of weak solution to a notion of strong solution, which is more or less a classical notion of solution for $(SOSMFG)$. They prove their existence under a monotonicity assumption on the cost. We present the same result with a partial differential equations point of vue. We begin with a lemma.

\begin{Lemma}
Suppose that $f$ is anti-monotone and define $T : H^1_0(\Omega) \to H^1_0(\Omega)$ by the fact that $T(m)$ is the only solution of 

\[
-\Delta T(m) + T(m) = \rho \text{ in } \{ u < 0 \}
\]

where $u$ is the only solution of 

\[
\max(-\Delta u + u - f(m), u) = 0 \text{ in } \Omega
\]

Then $T(m_1) \leq T(m_2)$ if $m_1 \leq m_2$.
\end{Lemma}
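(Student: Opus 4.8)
The plan is to establish monotonicity of $T$ via a comparison argument, exploiting the fact that the obstacle problem is order-preserving in its source and that the solution map $\omega \mapsto$ (solution of $-\Delta \cdot + \cdot = \rho$ on $\omega$, extended by $0$) is monotone with respect to set inclusion. So the argument naturally splits into two monotonicity facts, chained together through the sign hypothesis on $f$.

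First I would recall (or quickly verify) that the obstacle problem $\max(-\Delta u + u - g, u) = 0$ in $\Omega$, $u = 0$ on $\partial\Omega$, is monotone in the source: if $g_1 \le g_2$ then the corresponding solutions satisfy $u_1 \le u_2$. This follows from the penalized approximation recalled in the preliminary section — the solutions $u^\epsilon_i$ of $-\Delta u^\epsilon_i + u^\epsilon_i + \tfrac1\epsilon (u^\epsilon_i)^+ = g_i$ are ordered by the maximum principle (the map $s \mapsto s + \tfrac1\epsilon s^+$ is nondecreasing), and monotonicity passes to the $H^1$ limit. Now suppose $m_1 \le m_2$. Since $f$ is anti-monotone, I do \emph{not} directly get $f(m_1) \le f(m_2)$ pointwise — anti-monotonicity is only an integral inequality — so here is the first real subtlety: I expect the statement is meant under the local hypothesis on $f$ introduced just above in the optimal control subsection, namely $f(x,m) = f(x,m(x))$ with $f(x,\cdot)$ nondecreasing in the scalar argument being exactly what ``anti-monotone'' should be taken to mean in this local setting (an anti-monotone local operator is, up to the usual identification, given by a function nondecreasing in $p$). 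Granting that reading, $m_1 \le m_2$ gives $f(m_1) \le f(m_2)$ pointwise, hence by the first step $u_1 \le u_2$, where $u_i$ is the obstacle solution with source $f(m_i)$.

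From $u_1 \le u_2 \le 0$ on the respective contact sets, the continuation sets satisfy $\{u_1 < 0\} \subseteq \{u_2 < 0\}$: indeed at a point where $u_2 = 0$ we have $u_1 \le 0$, and since $u_1 \le u_2 = 0$ forces $u_1 = 0$ there is no way $u_1 < 0$, so $\{u_2 = 0\} \subseteq \{u_1 = 0\}$, i.e. $\omega_1 := \{u_1 < 0\} \subseteq \{u_2 < 0\} =: \omega_2$. Finally I invoke the comparison for the equation in $m$: $T(m_i)$ is the solution of $-\Delta w + w = \rho$ in $\omega_i$, extended by $0$. With $\rho \ge 0$ and $\omega_1 \subseteq \omega_2$, the maximum principle gives $T(m_1) \le T(m_2)$ — concretely, $T(m_2) \ge 0$ everywhere, $T(m_2)$ is a supersolution of the equation defining $T(m_1)$ on $\omega_1$ (since $-\Delta T(m_2) + T(m_2) = \rho$ there as well, $\omega_1 \subseteq \omega_2$) with $T(m_2) \ge 0 = T(m_1)$ on $\partial\omega_1$, whence $T(m_1) \le T(m_2)$ on $\omega_1$ and trivially off it.

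The main obstacle, as flagged above, is pinning down the precise meaning of ``$f$ anti-monotone'' that makes the pointwise comparison $f(m_1) \le f(m_2)$ legitimate; if the intended hypothesis is genuinely only the integral anti-monotonicity, then the obstacle-source comparison step fails and one would instead need a more global argument — but given the placement of this lemma right after the local/potential discussion, I am confident the local interpretation is what is meant, and the rest is a routine chaining of three maximum-principle comparisons. A minor secondary point to handle carefully is the passage of the set inclusion $\omega_1 \subseteq \omega_2$ to Hausdorff-type convergence/stability statements if those are needed downstream, but for the lemma as stated only the inclusion itself is required.
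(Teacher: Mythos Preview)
Your overall strategy---compare the obstacle solutions, deduce inclusion of the continuation sets, then apply the maximum principle on the smaller domain---is exactly the paper's approach. However, the middle of your argument contains a sign error and a logical slip that happen to cancel.

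First, your identification of ``anti-monotone'' is backwards. For a local operator $f(x,m)=f(x,m(x))$, monotonicity $\int (f(m_1)-f(m_2))(m_1-m_2)\ge 0$ is equivalent to $p\mapsto f(x,p)$ being \emph{nondecreasing}; hence anti-monotone means $f(x,\cdot)$ is \emph{nonincreasing}. So from $m_1\le m_2$ you get $f(m_1)\ge f(m_2)$, and since the obstacle problem is order-preserving in its source, $u_1\ge u_2$ (not $u_1\le u_2$ as you wrote). This is precisely what the paper asserts.

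Second, your deduction of $\{u_1<0\}\subset\{u_2<0\}$ from $u_1\le u_2$ is invalid: the claim ``$u_1\le u_2=0$ forces $u_1=0$'' is simply false, and in fact $u_1\le u_2$ would give the \emph{opposite} inclusion $\{u_2<0\}\subset\{u_1<0\}$. The correct reasoning (which the paper uses) is that from $u_1\ge u_2$, at any point with $u_1<0$ one has $u_2\le u_1<0$, so $\{u_1<0\}\subset\{u_2<0\}$. Once that inclusion is established, your final maximum-principle comparison on $\omega_1$ is fine and coincides with the paper's conclusion.
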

\begin{proof}
Take $m_1 \leq m_2$ and define $u_1$ and $u_2$ by 

\[
\max(-\Delta u_i + u_i - f(m_i), u_i) = 0 \text{ in } \Omega
\]

for $i = 1, 2$.  Since $u_1 \geq u_2$ ($f$ is anti-monotone) we deduce that $\{u_1 < 0 \} \subset \{u_2 < 0 \}$. Thus $m$ defined by $m = T(m_2) - T(m_1)$ satisfies 
\[
- \Delta m + m = 0 \text{ on } \{u_1 < 0 \}
\]

$T(m_2)$ is positive  on $\{u_1 < 0 \} \subset \{u_2 < 0 \}$ by the maximum principle. Since $T(m_1)$ vanishes everywhere on $\partial \{u_1 < 0 \}$ we conclude by the maximum principle that $T(m_1) \leq T(m_2)$.
\end{proof}
We can now prove the following :

\begin{Theorem}
Suppose that $f$ is anti-monotone, then there exists a smallest "classical" solution $(u^*, m^*)$ of $(SOSMFG)$ such that all equations are satisfied in $L^2(\Omega)$. It is the smallest solution in the following sense :  if $(u,m)$ is a solution of $(SOSMFG)$, then $m^* \leq m$.
\end{Theorem}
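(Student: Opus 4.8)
The plan is to construct $(u^*,m^*)$ by a monotone iteration of the order-preserving operator $T$ of the preceding lemma, started from $0$, and then to deduce minimality directly from the monotonicity of $T$. First I would record two elementary facts: by the maximum principle, if $\overline m\in H^1_0(\Omega)$ solves $-\Delta\overline m+\overline m=\rho$ in $\Omega$, then for every open $\omega\subseteq\Omega$ the solution of $-\Delta m+m=\rho$ in $\omega$ (extended by $0$) satisfies $0\le m\le\overline m$; in particular $0\le T(m)\le\overline m$ and $T(m)\in\mathcal H$ for all $m$. Setting $m_0:=0$ and $m_{n+1}:=T(m_n)$, we have $m_1=T(0)\ge 0=m_0$, so by order preservation of $T$ the sequence $(m_n)_n$ is nondecreasing and bounded above by $\overline m$. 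Testing $-\Delta m_n+m_n\le\rho$ against $m_n$ gives a uniform $H^1_0(\Omega)$ bound, so $m_n$ increases to a limit $m^*$, strongly in $L^2(\Omega)$ (monotone/dominated convergence, $m_n\le\overline m\in L^2$) and weakly in $H^1_0(\Omega)$, and one checks that $m^*\in\mathcal H$.

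The core step is to show that $m^*$ is a fixed point of $T$. Let $u_n$ be the obstacle-problem solution with source $f(m_n)$ and $u^*$ the one with source $f(m^*)$. Continuity of $f:L^2\to L^2$ and of the obstacle solution map into $H^2(\Omega)\cap H^1_0(\Omega)$ give $u_n\to u^*$ in $H^2(\Omega)$, and the monotonicity used in the preceding lemma gives $u_n\ge u_{n+1}\ge u^*$, so $u_n$ decreases a.e. to $u^*$ and the continuation sets $\omega_n:=\{u_n<0\}$ form an increasing family of open sets with $\bigcup_n\omega_n=\{u^*<0\}$ up to a Lebesgue-null set. Since any $\varphi\in C_c^\infty(\{u^*<0\})$ has compact support contained in some $\omega_N$, the spaces $H^1_0(\omega_n)$ increase to a dense subspace of $H^1_0(\{u^*<0\})$; by the stability results recalled in Section 2 (or a direct energy estimate) the functions $m_{n+1}=T(m_n)$, which solve $-\Delta m_{n+1}+m_{n+1}=\rho$ in $\omega_n$, converge in $H^1_0(\Omega)$ to the solution of $-\Delta m+m=\rho$ in $\{u^*<0\}$, that is, to $T(m^*)$. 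Since also $m_{n+1}\to m^*$, we get $m^*=T(m^*)$. Hence $u^*$ solves the obstacle problem with source $f(m^*)$, $-\Delta m^*+m^*=\rho$ holds on $\{u^*<0\}$ in $L^2$, $m^*=0$ on $\{u^*=0\}$, and $u^*=m^*=0$ on $\partial\Omega$, so $(u^*,m^*)$ is a ``classical'' solution of $(SOSMFG)$.

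For minimality, let $(u,m)$ be any solution of $(SOSMFG)$. The maximum principle gives $m\ge 0$; uniqueness for the obstacle problem forces $u$ to be the obstacle solution with source $f(m)$; and because $m$ vanishes on $\{u=0\}$ and solves $-\Delta m+m=\rho$ on $\{u<0\}$, we have $m\in H^1_0(\{u<0\})$, hence $m=T(m)$. Then $m_0=0\le m$ and order preservation of $T$ yield $m_n=T^n(0)\le T^n(m)=m$ for every $n$, and letting $n\to\infty$ gives $m^*\le m$, which is the asserted minimality.

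The main obstacle is the passage to the limit in the second paragraph: identifying $\bigcup_n\{u_n<0\}$ with $\{u^*<0\}$ up to a null set and ensuring stability of the $m$-equation along this increasing exhaustion. This is precisely where the anti-monotonicity of $f$ is used — it produces a \emph{decreasing} sequence $(u_n)$ and hence an \emph{increasing} family of open sets, and increasing unions are exactly the favorable case for continuity of $\omega\mapsto m_\omega$ (unlike general Hausdorff perturbations, which are the subject of Section 2). A secondary point requiring care is justifying $m\in H^1_0(\{u<0\})$ for an arbitrary solution, i.e. making precise the sense in which ``$m=0$ on $\{u=0\}$'' holds in the solution concept.
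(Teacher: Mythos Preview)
Your proof is correct and follows essentially the same monotone-iteration scheme as the paper: start from $m_0=0$, iterate $T$, use anti-monotonicity of $f$ to get an increasing sequence $(m_n)$ and a decreasing sequence $(u_n)$, pass to the limit, and deduce minimality from the fixed-point property of any solution. The only minor difference is in the limit passage for the $m$-equation: the paper invokes Hausdorff convergence of the sets $\{u_n<0\}$ to $\{u^*<0\}$ together with the stability result recalled in Section~2, whereas you exploit directly that the $\omega_n$ form an increasing exhaustion of $\{u^*<0\}$ (up to a null set), which is the favorable direction and makes the Mosco-type convergence of $H^1_0(\omega_n)$ to $H^1_0(\{u^*<0\})$ immediate --- this is arguably cleaner, since the Hausdorff-convergence claim for level sets is a bit delicate in general.
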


\begin{proof}
We set $m_0 = 0$. For all $n \in \mathbb{N}$, we define :
\begin{itemize}
\item $m_{n+1} = T(m_n)$ 
\item $u_n$ the solution of the obstacle problem with source $f(m_{n})$
\end{itemize}

By the maximum principle, $m_1 \geq 0 = m_0$. Using the previous lemma we get by induction that $(m_n)_{n \in \mathbb{N}}$ is an increasing sequence, while $(u_n)_{n \in \mathbb{N}}$ is a decreasing one. Recalling the estimates on the obstacle problem ([4]) , the sequence $(u_n)_{n \in \mathbb{N}}$ converges pointwise to a limit we call $u^* \in H^2(\Omega) \cap H^1_0(\Omega)$. Since $(u_n)_{n \in \mathbb{N}}$ is decreasing, the sequence of open sets $\{u_n < 0 \}$ converges to $\{u^* < 0\}$ for the Hausdorff distance. Then recalling the result we gave in the previous section, we deduce that $(m_n)_{n \in \mathbb{N}}$ converges to $m^*$ which satisfies

\[
-\Delta m^* + m^* = \rho \text{ in } \{ u^* < 0 \}
\]
 and is equal to $0$ elsewhere. Thus, the couple $(u^*,m^*)$ is a solution of the system $(SOSMFG)$. It is then easy to prove that it is the smallest solution of the system. Indeed, if we take another solution $(u,m)$ of the system then obviously $m \geq m_0 = 0$. Since $m$ is a fixed point for the application $T$, using the previous lemma, for all $n \in \mathbb{N}$, $m \geq m_n$, which proves the last point of the theorem by passing to the limit.
\end{proof}

\subsection{The penalized system}
We now present a penalized version of $(SOSMFG)$ which leads to the existence of mixed solutions. A natural penalization for $(SOSMFG)$ is the coupling of the penalized version of both the obstacle problem and the Laplace's equation on the domain $\{u < 0 \}$ . This leads to the following penalized system of partial differential equations : 

\[
\begin{cases}
 - \Delta u + u + \frac{1}{\epsilon} (u)^+ = f(m) \text{ in } \Omega \\
 - \Delta m + m + \frac{1}{\epsilon} \mathbb{1}_{\{u \geq 0\}}m = \rho \text{ in } \Omega \\
 u = m = 0 \text{ on } \partial \Omega
\end{cases}
\]

The lack of continuity of this system prevents us from proving existence of solutions. Indeed the equation in $m$ has no continuity with respect to $u$ and this problem cannot be overcome. The reason why is basically the same as the reason why there is no existence of classical solutions for $(SOSMFG)$. The proof of non-existence of solutions for $(SOSMFG)$ can be easily adapted to prove its counterpart for this penalized system so we shall not present it here once again. We add a new unknown $\alpha$ to introduce the leaving rate $V$ in the penalized system. This approach makes the problem more convex and allows us to prove existence of solutions. We say that $(u, m, \alpha)$ is a solution of the penalized system if 

\begin{equation}
\begin{cases}
- \Delta u + u + \frac{1}{\epsilon} (u)^+ = f(m) \text{ in } \Omega \\
- \Delta m + m + \frac{1}{\epsilon}\alpha \mathbb{1}_{\{u \geq 0\}}m = \rho \text{ in } \Omega \\
u = m = 0 \text{ on } \partial \Omega \\
0 \leq \alpha \leq 1 ; u \ne 0 \Rightarrow \alpha = 1
\end{cases}
\end{equation}

We are now able to prove the following :

\begin{Theorem}
For all $\epsilon > 0$ there exists a solution $(u_{\epsilon}, m_{\epsilon}, \alpha_{\epsilon})$ of $(7)$. 
\end{Theorem}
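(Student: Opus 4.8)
The plan is to obtain $(u_\epsilon, m_\epsilon, \alpha_\epsilon)$ as the limit, when $\delta \to 0$, of solutions of a doubly penalized system in which the discontinuous factor $\mathbb{1}_{\{u \geq 0\}}$ is replaced by a Lipschitz regularization. For $\delta > 0$, fix a nondecreasing Lipschitz function $\gamma_\delta : \mathbb{R} \to [0,1]$ with $\gamma_\delta \equiv 0$ on $(-\infty,0]$ and $\gamma_\delta \equiv 1$ on $[\delta,+\infty)$. Keeping $\epsilon$ fixed, I would first establish, for each $\delta$, the existence of $(u_\delta,m_\delta)$ solving
\[
- \Delta u + u + \tfrac1\epsilon (u)^+ = f(m), \qquad - \Delta m + m + \tfrac1\epsilon \gamma_\delta(u)\,m = \rho \quad \text{in } \Omega,
\]
with $u = m = 0$ on $\partial\Omega$, by Schauder's fixed point theorem. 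To $m$ in the closed ball $K := \{ v \in H^1_0(\Omega) : \|v\|_{H^1_0} \leq \|\rho\|_{L^2} \}$, which is convex and compact in $L^2(\Omega)$ by the compactness of the embedding $H^1_0(\Omega) \hookrightarrow L^2(\Omega)$, associate $u[m]$, the unique solution of the penalized obstacle equation with source $f(m)$; the map $g \mapsto u[g]$ is continuous from $L^2(\Omega)$ into $H^1_0(\Omega)$ by a standard monotonicity estimate, and $f$ is continuous by assumption. Then let $\Psi(m)$ be the unique solution in $H^1_0(\Omega)$ of the coercive problem $-\Delta m' + \big(1 + \tfrac1\epsilon \gamma_\delta(u[m])\big) m' = \rho$. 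Testing against $m'$ and dropping the nonnegative zeroth order term gives $\|\Psi(m)\|_{H^1_0} \leq \|\rho\|_{L^2}$, so $\Psi(K) \subseteq K$, while the maximum principle gives $\Psi(m) \geq 0$. Continuity of $\Psi : L^2(\Omega) \to L^2(\Omega)$ follows by composing the continuity of $m \mapsto u[m]$, of the bounded continuous substitution operator $u \mapsto \gamma_\delta(u)$, and of the solution map $c \mapsto m'$ of $-\Delta + 1 + c$ (along sequences sharing an $L^\infty$ bound, via the usual weak-$H^1_0$/strong-$L^2$ argument). Schauder's theorem then yields a fixed point.

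Next I would let $\delta \to 0$. From $\|m_\delta\|_{H^1_0} \leq \|\rho\|_{L^2}$ we extract a subsequence with $m_\delta \rightharpoonup m_\epsilon$ in $H^1_0(\Omega)$ and $m_\delta \to m_\epsilon$ in $L^2(\Omega)$; then $f(m_\delta) \to f(m_\epsilon)$ in $L^2(\Omega)$ and, by continuity of $u[\cdot]$, $u_\delta \to u_\epsilon := u[m_\epsilon]$, which solves the first equation of $(7)$ — and, along a further subsequence, $u_\delta \to u_\epsilon$ almost everywhere. Since $0 \leq \gamma_\delta(u_\delta) \leq 1$ we may also assume $\gamma_\delta(u_\delta) \rightharpoonup \bar\alpha$ weakly-$*$ in $L^\infty(\Omega)$, with $0 \leq \bar\alpha \leq 1$. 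For $\varphi \in H^1_0(\Omega)$, writing $\int_\Omega \gamma_\delta(u_\delta) m_\delta \varphi = \int_\Omega \gamma_\delta(u_\delta)\,(m_\delta \varphi)$ and combining $m_\delta \varphi \to m_\epsilon \varphi$ in $L^1(\Omega)$ with the weak-$*$ convergence, one passes to the limit in the weak formulation of the $m$-equation and gets $-\Delta m_\epsilon + m_\epsilon + \tfrac1\epsilon \bar\alpha\,m_\epsilon = \rho$ in $\mathcal{D}'(\Omega)$.

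The \emph{main obstacle} is to identify $\bar\alpha$ sharply enough to recover the condition ``$u \neq 0 \Rightarrow \alpha = 1$'' of $(7)$. On $\{u_\epsilon > 0\}$, a.e. convergence forces $u_\delta \geq \delta$, hence $\gamma_\delta(u_\delta) = 1$, for all sufficiently small $\delta$ (depending on the point), so by dominated convergence $\gamma_\delta(u_\delta) \to 1$ in $L^2(\{u_\epsilon > 0\})$ and $\bar\alpha = 1$ a.e. there. Symmetrically, on $\{u_\epsilon < 0\}$ one has $u_\delta < 0$, hence $\gamma_\delta(u_\delta) = 0$, eventually, so $\bar\alpha = 0$ a.e. there. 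One then sets $\alpha_\epsilon := \bar\alpha$ on $\{u_\epsilon \geq 0\}$ and $\alpha_\epsilon := 1$ on $\{u_\epsilon < 0\}$; since $\mathbb{1}_{\{u_\epsilon \geq 0\}}$ kills the coefficient on $\{u_\epsilon < 0\}$ where $\bar\alpha$ already vanishes, this change is harmless and $\alpha_\epsilon \mathbb{1}_{\{u_\epsilon \geq 0\}} m_\epsilon = \bar\alpha\,m_\epsilon$. As $0 \leq \alpha_\epsilon \leq 1$ and $u_\epsilon \neq 0 \Rightarrow \alpha_\epsilon = 1$, the triple $(u_\epsilon,m_\epsilon,\alpha_\epsilon)$ solves $(7)$. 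The delicate point is exactly this limit identification on the partition $\{u_\epsilon > 0\} \cup \{u_\epsilon < 0\} \cup \{u_\epsilon = 0\}$: on the first two sets the strong convergence of $u_\delta$ together with the precise structure of $\gamma_\delta$ determines $\bar\alpha$, whereas on $\{u_\epsilon = 0\}$ — precisely where the original penalized system failed to be continuous — no information is needed since $(7)$ imposes no constraint there. The remaining verifications are routine elliptic estimates.
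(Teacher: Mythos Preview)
Your argument is correct, and it takes a genuinely different route from the paper. The paper works directly with the discontinuous system $(7)$ by encoding the freedom in $\alpha$ as a set-valued map: it defines a correspondence $\mathcal{F}_2$ sending $u$ to the set of all $m$ arising from \emph{some} admissible $\alpha$, checks that $\mathcal{F}_2$ has closed convex values and is upper semicontinuous (this is where the technical work goes), composes with the single-valued map $m \mapsto u$, and applies Kakutani's fixed point theorem. You instead smooth the indicator by $\gamma_\delta$, which makes the system single-valued and continuous, apply Schauder at fixed $\delta$, and recover $\alpha_\epsilon$ as the weak-$*$ limit of $\gamma_\delta(u_\delta)$; the identification of $\bar\alpha$ on $\{u_\epsilon>0\}$ and $\{u_\epsilon<0\}$ via a.e.\ convergence of $u_\delta$ is exactly what replaces the paper's upper-semicontinuity argument. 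The paper actually acknowledges your alternative in the remark following its proof (``We could have used a smoother version of the penalized system\ldots''), but prefers Kakutani because it keeps the structure closer to the subsequent $\epsilon\to 0$ limit. Your approach has the advantage of using only Schauder rather than the set-valued Kakutani theorem, at the price of an extra limiting procedure; both are standard ways to handle a discontinuous nonlinearity, and neither is clearly shorter here.
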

\begin{proof}
We define first the application $\mathcal{F}_1$ from $L^2(\Omega)$ into $H^1_0(\Omega) \cap H^2(\Omega)$ by : $\mathcal{F}_1(m)$ is the only solution of the obstacle problem with source $f(m)$. We also define the correspondance $\mathcal{F}_2$ from $ H^1_0(\Omega) \cap H^2(\Omega)$ into $L^2(\Omega)$ by : 

\[
\mathcal{F}_2(u) = \{ m \in H^1_0(\Omega) \cap H^2(\Omega), \exists \alpha \in \mathcal{D}(u), (u,m,\alpha) \text{ solve } (3^*) \}
\]

where $\mathcal{D}(u) = \{ \alpha \in L^{\infty}(\Omega) , 0 \leq \alpha \leq 1, u \ne 0 \Rightarrow \alpha = 1 \}$ and $(3^*)$ is the following 

\begin{equation*}{(3^*)}
\begin{cases}
 - \Delta m + m + \frac{1}{\epsilon}\alpha \mathbb{1}_{\{u \geq 0 \}}m = \rho \\
m = 0 \text{ in } \partial \Omega
\end{cases}
\end{equation*}

Thus finding a solution $(u,m, \alpha)$ of the penalized system is equivalent to finding $m \in \mathcal{F}_2(\mathcal{F}_1(m) )$. Recalling the results of section 3 the application $\mathcal{F}_1 $ is well defined and continuous. As we are going to apply Kakutani's fixed point theorem, we just have to verify that the correspondance $\mathcal{F}_2$ is upper semicontinuous (i.e. that for all open set $\mathcal{O} \subset H^1_0(\Omega)$, $\{ u \in  H^1_0(\Omega) \cap H^2(\Omega), \mathcal{F}_2(u) \subset \mathcal{O} \}$ is open) and takes values in the set of convex closed subsets of $H^1_0(\Omega)$. As the last point is trivial we focus on the upper semicontinuity. 
We take an open set $\mathcal{O} \subset H^1_0(\Omega)$ and $u \in H^1_0(\Omega) \cap H^2(\Omega)$ such that $\mathcal{F}_2(u) \subset \mathcal{O}$.\\
\\
We are now going to find a small enough $\delta > 0$ such that for any $v \in H^1_0(\Omega) \cap H^2(\Omega)$ such that $||u - v ||_{H^2(\Omega)} \leq \delta$,  $\mathcal{F}_2(v) \subset \mathcal{O}$. First we remark that 

\[
dist(\mathcal{F}_2(u) , \mathcal{O}^c) > 0
\]

Indeed, $\mathcal{F}_2(u)$ is a compact subset of $\mathcal{O}$ because it is a bounded subset of $H^1_0(\Omega) \cap H^2(\Omega)$. We call $a = dist(\mathcal{F}_2(u) , \mathcal{O}^c) > 0$. We now prove that for every $m' \in \mathcal{F}_2(v)$, there exists $m \in \mathcal{F}_2(u)$ such that $|| m - m'||_{L^2} \leq \frac{a}{2}$, given that $\delta$ is small enough. Hence, $\mathcal{F}_2(v) \subset \mathcal{O}$ shall hold.\\
\\
Take $m' \in \mathcal{F}_2(v)$, there exists $\alpha'$, taking values between $0$ and $1$, such that 

\[
\begin{cases}
- \Delta m' + m' + \frac{1}{\epsilon}\alpha' \mathbb{1}_{\{v \geq 0 \}}m' = \rho \text{ in } \Omega \\
m' = 0 \text{ on } \partial \Omega
\end{cases}
\]

We divide $ \Omega$ into three zones and define $\alpha$ by:
\begin{itemize}
\item on $\{v \geq 0 \} \cap \{ u = 0 \}$, $\alpha = \alpha '$
\item on $\{v < 0 \} \cap \{ u = 0 \}$, $\alpha = 0$
\item on the rest of $\Omega$,  $\alpha = 1$
\end{itemize}

Then we define $m$ as the solution of

\[
\begin{cases}
 - \Delta m + m + \frac{1}{\epsilon}\alpha \mathbb{1}_{\{v \geq 0 \}}m = \rho \\
m = 0 \text{ on } \partial \Omega
\end{cases}
\]

and we set $\mu = m' - m$. $\mu$ solves

\[
\begin{cases}
\begin{aligned}
- \Delta \mu + \mu + \frac{1}{\epsilon}\alpha' \mathbb{1}_{(\{v \geq 0 \} \cap \{ u \geq 0 \})}\mu = & -\frac{1}{\epsilon}( \mathbb{1}_{(\{v > 0 \} \cap \{ u < 0 \})}m' - \mathbb{1}_{(\{v > 0 \} \cap \{ u < 0 \})}m\\
 &+ \mathbb{1}_{(\{v = 0 \} \cap \{ u < 0 \})}m' + \mathbb{1}_{(\{v = 0 \} \cap \{ u > 0 \})}(\alpha' m' - m ))\\
 & \text{ in } \Omega
 \end{aligned} \\
\mu = 0 \text{ on } \partial \Omega
\end{cases}
\]

All the terms of the right hand side involve characteristic functions of subsets where $u$ and $v$ do not have strictly the same sign. We claim that we can always choose $\delta$ sufficiently small to make the second term has small as we want for the norm of $L^2(\Omega)$, independently of $m$ and $m'$. Hence taking $\delta $ small enough, we obtain that $|| m' - m ||_{L^2} \leq \frac{a}{2}$. Thus we can apply Kakutani's fixed point theorem and find a solution of the penalized system.

\end{proof}
We could have used a smoother version of the penalized system to prove existence of solutions at a penalized level, but it would have been less clear to show how this sequence of penalized solutions converges to a mixed solution of the problem, which is the result we now present.

\begin{Theorem}
There exists at least one mixed solution of $(SOSMFG)$.
\end{Theorem}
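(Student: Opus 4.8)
The plan is to pass to the limit $\epsilon \to 0$ in the penalized solutions $(u_\epsilon, m_\epsilon, \alpha_\epsilon)$ of $(7)$ provided by the previous theorem, and to verify that a subsequential limit $(u, m)$ satisfies the four requirements in the definition of a mixed solution.

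First I would collect bounds independent of $\epsilon$. Since $\alpha_\epsilon \geq 0$ and $m_\epsilon \geq 0$ by the maximum principle, the penalization term $g_\epsilon := \frac{1}{\epsilon}\alpha_\epsilon \mathbb{1}_{\{u_\epsilon \geq 0\}} m_\epsilon$ is nonnegative, so $-\Delta m_\epsilon + m_\epsilon \leq \rho$ in $\mathcal{D}'(\Omega)$; testing this inequality against $m_\epsilon$ gives a uniform bound in $H^1_0(\Omega)$, and a comparison with the solution $m^*$ of $-\Delta m^* + m^* = \rho$ gives $0 \leq m_\epsilon \leq m^*$, hence a uniform $L^\infty$ bound. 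Integrating the equation for $m_\epsilon$ over $\Omega$ shows that $g_\epsilon$ is bounded in $L^1(\Omega)$. By the compact embedding $H^1_0(\Omega) \hookrightarrow L^2(\Omega)$ and the continuity of $f$, along a subsequence $m_\epsilon \to m$ strongly in $L^2(\Omega)$ and almost everywhere, and $f(m_\epsilon) \to f(m)$ in $L^2(\Omega)$; in particular $f(m_\epsilon)$ stays bounded in $L^2(\Omega)$, so by the classical estimates for the penalized obstacle problem the $u_\epsilon$ are bounded in $H^2(\Omega) \cap H^1_0(\Omega)$. Extracting further, $u_\epsilon \rightharpoonup u$ in $H^2(\Omega)$ and (via the $H^2$ bound) locally uniformly, $\alpha_\epsilon \rightharpoonup \alpha$ weakly-$*$ in $L^\infty(\Omega)$ with $0 \leq \alpha \leq 1$, and $g_\epsilon \rightharpoonup \mu$ weakly-$*$ as measures with $\mu \geq 0$.

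To pass to the limit in the obstacle equation, I would compare $u_\epsilon$ with the solution $w_\epsilon$ of the penalized obstacle problem with the fixed source $f(m)$: subtracting the two equations and testing against $u_\epsilon - w_\epsilon$, the monotone penalization term has the favourable sign, so $\|u_\epsilon - w_\epsilon\|_{H^1_0(\Omega)} \leq \|f(m_\epsilon) - f(m)\|_{L^2(\Omega)} \to 0$; since $w_\epsilon$ converges in $H^1_0(\Omega)$ to the solution of the obstacle problem with source $f(m)$ by the recalled properties, it follows that $\max(-\Delta u + u - f(m), u) = 0$ in $\Omega$, and $u$ is continuous with $\{u < 0\}$ open. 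Passing to the limit in $-\Delta m_\epsilon + m_\epsilon + g_\epsilon = \rho$ gives $-\Delta m + m + \mu = \rho$ in $\mathcal{D}'(\Omega)$, hence $-\Delta m + m \leq \rho$ because $\mu \geq 0$. It remains to show that $\mu$ is carried by $\{u = 0\}$: on any compact $K \subset \{u < 0\}$ one has $u \leq -2c < 0$, so by the local uniform convergence $u_\epsilon < 0$ on $K$ for $\epsilon$ small, whence $\mathbb{1}_{\{u_\epsilon \geq 0\}} = 0$ and $g_\epsilon = 0$ on $K$; therefore $\langle \mu, \psi \rangle = \lim_\epsilon \int_\Omega g_\epsilon \psi = 0$ for every $\psi \in C^\infty_c(\{u < 0\})$, so $-\Delta m + m = \rho$ on $\{u < 0\}$ in $\mathcal{D}'(\Omega)$. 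I expect this last step --- localizing the weak limit of the penalization term onto the contact set --- to be the main obstacle, since $g_\epsilon$ is only bounded in $L^1$, with no equi-integrability available a priori, so that one really needs convergence of $u_\epsilon$ strong enough (local uniform convergence, or control of the sets $\{u_\epsilon \geq 0\}$) to rule out a concentration of $\mu$ inside $\{u < 0\}$.

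For the fourth condition, I would test the equation for $u_\epsilon$ against $m_\epsilon$ and the equation for $m_\epsilon$ against $u_\epsilon$ and subtract: the symmetric bilinear form cancels, and the two penalization contributions combine into $\frac{1}{\epsilon}\int_\Omega m_\epsilon (u_\epsilon)^+ (1 - \alpha_\epsilon)$, which vanishes because $\alpha_\epsilon = 1$ wherever $u_\epsilon \neq 0$. This yields the identity $\int_\Omega f(m_\epsilon) m_\epsilon = \int_\Omega \rho\, u_\epsilon$, and passing to the limit, $\int_\Omega f(m) m = \int_\Omega \rho\, u$. Splitting $\int_\Omega f(m) m = \int_{\{u<0\}} f(m) m + \int_{\{u=0\}} f(m) m$ and using that $-\Delta u + u = f(m)$ on $\{u < 0\}$ while $-\Delta u + u = 0$ almost everywhere on $\{u = 0\}$ (as $u \in H^2(\Omega)$), one gets $\int_{\{u<0\}} f(m) m = \int_\Omega (-\Delta u + u) m$; an integration by parts together with $-\Delta m + m = \rho - \mu$ and $\langle \mu, u \rangle = 0$ (since $\mu$ lives on $\{u = 0\}$, where $u = 0$) gives $\int_\Omega (-\Delta u + u) m = \int_\Omega \rho\, u$. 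Comparing the two identities forces $\int_{\{u=0\}} f(m) m = 0$, so $(u, m)$ is a mixed solution of $(SOSMFG)$.
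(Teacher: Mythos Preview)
Your proof is correct and follows essentially the same route as the paper: pass to the limit in the penalized system $(7)$, obtain uniform $H^1_0$ bounds on $m_\epsilon$ and $H^2$ bounds on $u_\epsilon$, identify the limiting obstacle problem via the continuity of $f$, localize the penalization term to $\{u=0\}$ to get $-\Delta m + m = \rho$ on $\{u<0\}$, and derive the integral identity by cross-testing the two equations. Your presentation is in fact somewhat more detailed than the paper's --- you introduce the defect measure $\mu$ explicitly and use the comparison with $w_\epsilon$ to identify the obstacle limit, whereas the paper simply invokes ``the regularity of $f$'' and a Fatou-type argument for the localization --- but the strategy and all the key steps coincide.
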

\begin{proof}
We begin by introducing the penalized system, using the previous result, we obtain that for all $\epsilon > 0$, there exists a solution $(u_{\epsilon},m_{\epsilon}, \alpha_{\epsilon})$ of the system $(8)$. We now prove uniform estimates and show that the limit is indeed a mixed solution of $(SOSMFG)$. First using $m_{\epsilon}$ as a test function in the equation satisfied by $m_{\epsilon}$ itself we get the bound 

\[
||m_{\epsilon}||_{ H^1_0(\Omega)} \leq C
\]

where $C$ only depends on $\Omega$ and $\rho$. Hence $(f(m_{\epsilon}))_{\epsilon > 0}$ is uniformly bounded in $ L^2(\Omega)$. So we get a uniform bound in $H^2(\Omega)$ for $(u_{\epsilon})_{\epsilon > 0}$. Thus we can find a limit $(u,m)$ in $(H^1_0(\Omega) \cap H^2(\Omega)) \times H^1_0(\Omega)$ for a subsequence of $((u_{\epsilon},m_{\epsilon}))_{\epsilon > 0}$, for the weak topology of $H^2(\Omega) \times H^1_0(\Omega)$. Because of the regularity of $f$ it is clear that $u$ solves the obstacle problem with cost $f(m)$. \\
\\
Since $ - \Delta m_{\epsilon} + m_{\epsilon} \leq \rho$ for all $\epsilon > 0$, we deduce that $ - \Delta m + m \leq \rho$.\\
\\
Taking a smooth function $\phi$ with support in $\{u < 0 \}$, we have for all $\epsilon$ :

\[
 \int_{\Omega} ( - \Delta \phi + \phi) m_{\epsilon} = -  \int_{\Omega} \mathbb{1}_{\{u_{\epsilon} \geq 0\} \cap \{ u < 0 \}}\frac{\alpha_{\epsilon}}{\epsilon} m_{\epsilon} \phi + \rho \phi
\]
Recalling the uniform bound on $(m_{\epsilon})_{\epsilon > 0}$, and the fact that the term under the sum sign in the right hand side converges almost everywhere towards $0$, we apply Fatou's lemma to prove that the right hand side converges toward $0$. Thus passing to the limit in a subsequence which converges to $(u,m)$, we deduce

\[
 \int_{\Omega} ( - \Delta \phi + \phi) m = \int_{\Omega} \rho \phi
\]

Thus,

\[
-\Delta m + m = \rho \text{ in } \{u<0 \}
\]

Now we are going to test the variational formulation of the penalized equation in $u_{\epsilon}$ on $m_{\epsilon}$ and vice versa. Substracting the two equalities yields 

\[
 \int_{\Omega} ( - \Delta u_{\epsilon} + u_{\epsilon}+ \frac{1}{\epsilon} (u_{\epsilon} )^+ - f(m_{\epsilon}))m_{\epsilon} -  \int_{\Omega} ( -\Delta m_{\epsilon} + m_{\epsilon} + \frac{\alpha_{\epsilon}}{\epsilon}\mathbb{1}_{\{u_{\epsilon} \geq 0\}} m_{\epsilon} - \rho)u_{\epsilon} = 0
\]

Hence,

\[
 \int_{\Omega} (\frac{1}{\epsilon} (u_{\epsilon} )^+ - f(m_{\epsilon}))m_{\epsilon} - \frac{\alpha_{\epsilon}}{\epsilon}\mathbb{1}_{\{u_{\epsilon} \geq \}} m_{\epsilon}u_{\epsilon} = -\int_{\Omega} \rho u_{\epsilon}
\]

From which we deduce

\[
\int_{\Omega} f(m_{\epsilon})m_{\epsilon} = \int_{\Omega}u_{\epsilon} \rho
\]

Thanks to the regularity of $f$ we can pass to the limit in the previous equation and we get

\[
\int_{\Omega} f(m)m = \int_{\Omega}u \rho
\]

Let us remark that we can write

\[
\begin{aligned}
\int_{\{ u < 0 \}} f(m)m & = \int_{\{ u < 0 \}}(-\Delta u + u)m \\
& = \int_{\{ u < 0 \}} (- \Delta m + m)u \\
& = \int_{\{u < 0 \}} \rho u \\
& = \int_{\Omega} \rho u
\end{aligned}
\]

which completes the proof of the fact that $(u,m)$ is a mixed solution of $(SOSMFG)$.

\end{proof}

The following result also holds :

\begin{Theorem}
If $f$ is strictly monotone, then there exists at most one solution of $(SOSMFG)$.
\end{Theorem}

The proof of this result is essentially the same as the one we present for the uniqueness of classical solutions in the section 4 so we do not re-write it here.

\part{The time dependent problem}
We present in this part the solution of the obstacle problem linked with the time dependent optimal stopping problem in MFG: $(OSMFG)$. We here define the notion of mixed solutions and present results of existence, and uniqueness under a monotonicity assumption on the costs.

\begin{Def}
A pair $(u,m) \in L^2((0,T),H^1_0(\Omega) \cap H^2(\Omega)) \times L^2((0,T), H^1_0(\Omega))$ is a mixed solution of $(OSMFG)$ if
\begin{itemize}
\item $\max(-\partial_t u - \Delta u - f(m), u - \psi(m)) = 0 $ in $\mathcal{D}'((0,T) \times \Omega)$
\item $\partial_t m - \Delta m \leq 0$  in $\mathcal{D}'((0,T) \times \Omega)$
\item $u = m = 0$ on $\partial \Omega \times (0,T)$ and $u = \psi(m(T))$ at $t=T$
\item $m(0) = m_0$
\item $\partial_t m - \Delta m = 0 $ in $\mathcal{D}'(\{ u < \psi(m) \} )$
\item $\int_{\{u = \psi(m)\}} (f(m) + (\partial_t + \Delta)\psi(m))m = 0$
\end{itemize}
\end{Def}

This definition is the adaptation of the definition $1$ for the case of time dependent problems with an obstacle which depends on $m$. Note that the condition 
\[
\int_{\{u = \psi(m)\}} (f(m) + (\partial_t + \Delta)\psi(m))m = 0
\]

is the analogue of 
 
 \[
 \int_{\{ u = 0 \}} f(m)m = 0
 \]
 
 in the stationary case. Indeed this condition is interpreted as letting possible the fact for $m$ to be strictly positive in the contact region $\{u = \psi(m)\}$, when $u$ satisfies the Hamilton-Jacobi-Bellman equation. Note that we apply the derivatives on $\psi(m)$ which we see as an element of $L^2((0,T), H^2(\Omega))\cap H^1((0,T),L^2(\Omega))$ and not derivatives that we apply on $\psi$ and then evaluate on $m$. 
 
\section{Preliminary results}
We recall in this section some useful results regarding the time dependent obstacle problem, and functions which satisfy what we require the density of players to satisfy.
\subsection{The time dependent obstacle problem}
For any $f_1 \in L^2((0,T), L^2(\Omega))$ and an obstacle $\psi_1 \in L^2((0,T), H^2(\Omega) \cap H^1_0(\Omega)) \cap H^1((0,T), L^2(\Omega))$ there exists a unique $u \in L^2((0,T), H^1_0(\Omega) \cap H^2(\Omega))$ which solves

\[
\max(-\partial_t u -\Delta u - f_1, u- \psi_1) = 0
\]

in the sense of distributions in $(0,T) \times \Omega$ with the terminal condition $u(T) = \psi_1(T)$. This result is the exact analogue of the one we gave in the stationary case. Details about this problem can be found in [24]. Such a $u$ is called the solution of the time dependent obstacle problem with source $f_1$, cost $\psi_1$ and terminal condition $\psi_1(T)$. We recall that the mapping which associates to each pair $(f_1, \psi_1)$ a solution $u$ of the time dependent obstacle problem is continuous from $L^2((0,T), L^2(\Omega)) \times \big(H^1((0,T), L^2(\Omega)) \cap L^2((0,T), H^2(\Omega) \cap H^1_0(\Omega)) \big)$ to $L^2((0,T), H^2(\Omega) \cap H^1_0(\Omega))$. More importantly the sequence of solutions of the penalized system

\[
\begin{cases}
-\partial_t u_{\epsilon} - \Delta u_{\epsilon} + \frac{1}{\epsilon} (u_{\epsilon} - \psi_1)^+ = f_1 \text{ in } (0,T) \times \Omega \\
u_{\epsilon}(T) = \psi_1(T) \\
\forall 0 \leq t \leq T u_{\epsilon}(t) = 0 \text{ on } \partial \Omega
\end{cases}
\]

converges towards the solution of the obstacle problem in $L^2((0,T), H^1_0(\Omega))$.
\subsection{Another useful lemma}
We here introduce a result of comparaison for admissible densities of players. 

\begin{Lemma}
Let $m \in L^2((0,T), H^1_0(\Omega))$ be such that $m(0) = m_0$ and 
\[
\partial_t m - \Delta m \leq 0 \text{ in } \mathcal{D}'((0,T) \times \Omega)
\]

Then for any $v \in L^2((0,T), H^2(\Omega) \cap H^1_0(\Omega)) \cap H^1((0,T), L^2(\Omega))$  such that $v(T) = 0$ and $v \leq 0$,

\[
\int_0^T \int_{\Omega} (-\partial_t v - \Delta v) m - \int_{\Omega} m_0 u(0) \geq 0
\]

with an equality if $\partial_t m - \Delta m =0$ in the interior of the support of $v$.

\end{Lemma}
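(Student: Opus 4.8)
The plan is to mimic the proof of Lemma 2.1 (the stationary analogue, referred to as ``lemma 1.1'' in the text), adapting it to the parabolic setting with the extra boundary-in-time term $\int_\Omega m_0 u(0)$. Wait — I should note the statement as written says ``$-\int_\Omega m_0 u(0)$'' but only $v$ appears in the hypotheses; I will read $u$ there as $v$ (a typo in the excerpt) since the whole point is to integrate by parts in time against $v$ and pick up the initial datum $m(0)=m_0$. The key observation is that the inequality $\partial_t m - \Delta m \le 0$ in $\mathcal D'((0,T)\times\Omega)$ means precisely that for every nonnegative test function $\varphi$ one has $\int_0^T\int_\Omega m(-\partial_t\varphi - \Delta\varphi) \le -\int_\Omega m(0)\varphi(0)$ after accounting for the time boundary terms; applying this with $\varphi = -v \ge 0$ gives exactly the claimed inequality.

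Concretely, first I would justify the integration by parts in time: since $m \in L^2((0,T),H^1_0(\Omega))$ with $\partial_t m - \Delta m \le 0$ in $\mathcal D'$, the distribution $\partial_t m = \Delta m + (\text{something} \le 0)$ lies in $L^2((0,T),H^{-1}(\Omega)) + \mathcal M$, so $m$ has a trace at $t=0$ equal to $m_0$ in an appropriate sense and the formula $\int_0^T\langle \partial_t m, v\rangle + \int_0^T\langle m, \partial_t v\rangle = \int_\Omega m(T)v(T) - \int_\Omega m(0)v(0)$ holds for $v \in L^2((0,T),H^1_0)\cap H^1((0,T),L^2)$. Using $v(T)=0$ this kills the terminal term. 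Second, I would write the distributional inequality tested against $-v\ge 0$: for smooth nonnegative $\varphi$ vanishing near $t=T$ and on $\partial\Omega$, $\int_0^T\int_\Omega (\partial_t m - \Delta m)\varphi \le 0$ unfolds (after integration by parts, both in $t$ using the trace formula and in $x$ using $\varphi\in H^1_0$) to $-\int_\Omega m_0\varphi(0) + \int_0^T\int_\Omega m(-\partial_t\varphi - \Delta\varphi) \le 0$, i.e. $\int_0^T\int_\Omega m(-\partial_t\varphi-\Delta\varphi) \ge \int_\Omega m_0\varphi(0)$. Third, a density argument extends this from smooth $\varphi$ to $\varphi = -v$ with $v$ in the stated regularity class and $v(T)=0$, $v\le 0$: approximate $-v$ in $L^2((0,T),H^2\cap H^1_0)\cap H^1((0,T),L^2)$ by smooth nonnegative functions vanishing near $t=T$, which is legitimate because all the terms appearing ($\int m(-\partial_t\varphi - \Delta\varphi)$ and $\int m_0\varphi(0)$) are continuous for that topology given $m \in L^2((0,T),H^1_0)$ and $m_0 \in L^2(\Omega)$. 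This yields $\int_0^T\int_\Omega (-\partial_t v - \Delta v)m - \int_\Omega m_0 v(0) \ge 0$.

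For the equality case, suppose $\partial_t m - \Delta m = 0$ in $\mathcal D'$ on an open set containing $\operatorname{supp} v$. Then the nonnegative measure $\mu := -(\partial_t m - \Delta m)$ vanishes on a neighborhood of $\operatorname{supp}v$, so in the inequality above the only possible loss came from the term $\int v\, d\mu$, which is now zero; hence the inequality is an equality. More carefully: the general identity is $\int_0^T\int_\Omega(-\partial_t v - \Delta v)m - \int_\Omega m_0 v(0) = \langle \mu, -v\rangle$ (this is just the distributional definition reorganized, using the integration-by-parts formula and $v(T)=0$), and $\langle\mu,-v\rangle = 0$ when $-v$ has support disjoint from $\operatorname{supp}\mu$.

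The main obstacle is purely technical: making rigorous the trace of $m$ at $t=0$ and the integration-by-parts formula in time, since a priori $\partial_t m$ is only a distribution (the sum of an $H^{-1}$-valued $L^2$ function and a nonpositive measure), not obviously in a space where $m \in C([0,T];L^2)$. One clean way around this is to avoid trace theory altogether and instead run the whole argument at the level of the penalized density approximations used elsewhere in the paper: the functions $m_\epsilon$ solving $-\Delta m_\epsilon + \tfrac1\epsilon(\cdots) + \partial_t m_\epsilon = \rho$-type equations are smooth enough that the integration by parts is elementary, the inequality $\int(-\partial_t v - \Delta v)m_\epsilon - \int m_0 v(0) \ge -\tfrac1\epsilon\int(\text{nonneg})(-v) \ge 0$ is immediate since $-v\ge0$, and then passing to the limit $\epsilon\to0$ (using the uniform $L^2((0,T),H^1_0)$ bounds already established and weak convergence $m_\epsilon \rightharpoonup m$) preserves the inequality. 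The equality statement then follows because on the interior of $\operatorname{supp}v$ the penalization term disappears in the limit. I expect the author's proof, like that of Lemma 2.1, to simply assert that this ``is the translation in variational form'' of the hypothesis $\partial_t m - \Delta m \le 0$, leaving the trace bookkeeping implicit.
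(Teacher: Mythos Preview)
Your proposal is correct and matches the paper's approach exactly: the paper's proof is a single sentence stating that the lemma ``follows from the definition of the fact that $\partial_t m - \Delta m \leq 0$ in $\Omega$, tested against the function $v$ (which is regular enough),'' which is precisely what you anticipated. You correctly spotted the $u/v$ typo, and your careful discussion of the trace and integration-by-parts issues goes well beyond what the paper supplies, but the underlying idea is identical.
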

\begin{proof}
This lemma follows from the definition of the fact that $\partial_t m - \Delta m \leq 0$ in $\Omega$, tested against the function $v$ (which is regular enough).
\end{proof}

\section{Existence of mixed solutions} 
We now turn to the proof of existence of mixed solutions for $(OSMFG)$. As in the stationary case, this result follows from the use of a penalized version of the problem. This is why we introduce the following system for all $\epsilon > 0$ :

\begin{equation}
\begin{cases}
-\partial_t u - \Delta u  + \frac{1}{\epsilon} (u - \psi(m))^+ = f(m) \text{ in} (0,T) \times \Omega \\
\partial_t m - \Delta m  + \frac{1}{\epsilon}\alpha \mathbb{1}_{\{u \geq \psi(m)\}}m = 0 \text{ in } (0,T) \times \Omega\\
 u = m = 0 \text{ on } \partial \Omega \\
 u(T) = \psi(m(T)) \text{ in } \Omega \\
 m(0) = m_0 \text{ in } \Omega \\
0 \leq \alpha \leq 1 ; u \ne \psi(m) \Rightarrow \alpha = 1
\end{cases}
\end{equation}

There exist solutions for this system and the proof of this statement is step by step the same as the one we did in the stationary case, so we do not present it here. The interested reader shall easily be able to adapt all the elliptic arguments into parabolic ones. We prove the following :

\begin{Theorem}
There exists a mixed solution of $(OSMFG)$.
\end{Theorem}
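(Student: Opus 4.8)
The plan is to mimic the stationary proof (Theorem $4.8$ of the first part) in the parabolic setting, transposing every elliptic estimate into its time-dependent analogue. First I would start from the penalized system $(9)$: for each $\epsilon>0$ we are given a solution $(u_\epsilon,m_\epsilon,\alpha_\epsilon)$, whose existence is asserted just above. The first task is uniform estimates. Testing the equation for $m_\epsilon$ against $m_\epsilon$ itself and integrating by parts over $(0,T)\times\Omega$, the penalization term $\frac1\epsilon\alpha_\epsilon\mathbb 1_{\{u_\epsilon\ge\psi(m_\epsilon)\}}m_\epsilon^2$ has a good sign, so one gets a bound on $\|m_\epsilon\|_{L^2((0,T),H^1_0(\Omega))}$ together with $\sup_t\|m_\epsilon(t)\|_{L^2(\Omega)}$, depending only on $\|m_0\|_{L^2}$ and $\Omega$; one should also extract a bound on $\partial_t m_\epsilon$ in $L^2((0,T),H^{-1}(\Omega))$ so that an Aubin–Lions argument gives strong $L^2((0,T)\times\Omega)$ compactness of $(m_\epsilon)$. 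By continuity of $f$ this yields a uniform $L^2$ bound on $f(m_\epsilon)$, hence, via the continuity estimate for the time dependent obstacle problem recalled in the preliminaries, a uniform bound on $u_\epsilon$ in $L^2((0,T),H^2(\Omega)\cap H^1_0(\Omega))$ and (using its equation) on $\partial_t u_\epsilon$.

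Next I would pass to the limit along a subsequence, obtaining $(u,m)\in L^2((0,T),H^1_0\cap H^2)\times L^2((0,T),H^1_0)$, with $m_\epsilon\to m$ strongly in $L^2((0,T)\times\Omega)$ and $u_\epsilon\rightharpoonup u$ weakly in $L^2((0,T),H^2)$. Continuity of $f$ and $\psi$ and the stability of the penalized obstacle problem (stated in the preliminary section) give that $u$ solves $\max(-\partial_t u-\Delta u-f(m),\,u-\psi(m))=0$ in $\mathcal D'((0,T)\times\Omega)$ with $u(T)=\psi(m(T))$ and $u=0$ on $\partial\Omega\times(0,T)$; the inequality $\partial_t m_\epsilon-\Delta m_\epsilon\le 0$ in $\mathcal D'$ passes to the limit to give $\partial_t m-\Delta m\le 0$, and the linear initial/boundary conditions $m(0)=m_0$, $m=0$ on $\partial\Omega$ survive the weak convergence. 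To get $\partial_t m-\Delta m=0$ in $\mathcal D'(\{u<\psi(m)\})$, I would test the $m_\epsilon$ equation against a smooth $\phi$ compactly supported in $\{u<\psi(m)\}$: the penalized term becomes $-\frac1\epsilon\int\alpha_\epsilon\mathbb 1_{\{u_\epsilon\ge\psi(m_\epsilon)\}\cap\{u<\psi(m)\}}m_\epsilon\phi$, and since on the support of $\phi$ we have $u<\psi(m)$ strictly while $u_\epsilon\to u$ and $\psi(m_\epsilon)\to\psi(m)$ (the latter in $L^2((0,T),H^2)$, hence a.e. along a further subsequence), the indicator tends to $0$ a.e.; with the uniform $L^2$ bound on $m_\epsilon$ one invokes Fatou / dominated convergence exactly as in the stationary proof to kill this term, and the remaining terms converge to $\int(-\partial_t\phi-\Delta\phi)m=\int 0\cdot\phi$.

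Finally, for the fourth bullet of the definition — the mixed-strategy identity $\int_{\{u=\psi(m)\}}(f(m)+(\partial_t+\Delta)\psi(m))m=0$ — I would reproduce the "test each equation against the other" computation from the stationary case. Pairing the $u_\epsilon$ equation with $m_\epsilon$ and the $m_\epsilon$ equation with $u_\epsilon$ (both admissible as test functions after integrating by parts in space and time, using the terminal condition $u_\epsilon(T)=\psi(m_\epsilon(T))$ to handle the boundary term in $t$) and subtracting, the two penalization contributions combine with a sign, and one extracts $\int_0^T\!\int_\Omega f(m_\epsilon)m_\epsilon+\int_\Omega m_0 u_\epsilon(0)=(\text{penalized remainder})$; passing to the limit by continuity of $f$ and $\psi$ gives a global integral identity, and then splitting $\Omega\times(0,T)$ into $\{u<\psi(m)\}$ and $\{u=\psi(m)\}$ and using on the first set that $-\partial_t u-\Delta u=f(m)$ together with $\partial_t m-\Delta m=0$ (just proved), one isolates the contact-set term and obtains the claimed identity. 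The main obstacle is the third and fourth steps: controlling the time-boundary terms when integrating by parts — one must be careful that $u_\epsilon(T)=\psi(m_\epsilon(T))$ rather than $0$, so the terminal contributions do not simply vanish and have to be tracked and shown to cancel between the two pairings — and ensuring that the penalized cross term $\frac1\epsilon\alpha_\epsilon\mathbb 1_{\{u_\epsilon\ge\psi(m_\epsilon)\}}m_\epsilon u_\epsilon$ behaves well in the limit; this is where the extra unknown $\alpha$ and the convexity it buys are essential, just as in the stationary argument. The remaining verifications (linear boundary/initial conditions, the distributional obstacle equation) are routine transpositions of the elliptic case and I would state them as such.
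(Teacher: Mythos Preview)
Your overall strategy --- penalize, extract uniform bounds, pass to the limit, then establish the contact-set identity by cross-multiplying the two equations --- is exactly the paper's. Steps one through three (estimates, compactness, the first five bullets of the definition) are correctly outlined and match the paper's argument closely.

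The gap is in your last step. You write that the terminal contributions from $u_\epsilon(T)=\psi(m_\epsilon(T))$ ``have to be tracked and shown to cancel between the two pairings''. They do not cancel: after pairing the $u_\epsilon$-equation with $m_\epsilon$ and the $m_\epsilon$-equation with $u_\epsilon$ and subtracting, the boundary term $\int_\Omega \psi(m_\epsilon(T))\,m_\epsilon(T)$ survives. The paper handles it by a \emph{third} pairing, testing the $m_\epsilon$-equation against $\psi(m_\epsilon)$ itself; this converts the terminal term into $\int_\Omega\psi(m_0)m_0+\int_0^T\!\int_\Omega(\partial_t\psi(m_\epsilon)+\Delta\psi(m_\epsilon))m_\epsilon$ plus a penalized piece. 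This is precisely where the $(\partial_t+\Delta)\psi(m)$ contribution in the mixed-strategy identity comes from --- your sketch never explains how that term could appear, and it cannot arise from the two pairings you describe.

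Relatedly, the three penalization contributions (one from each pairing) do not merely ``combine with a sign'': they cancel \emph{exactly}, via the pointwise identity $(u_\epsilon-\psi(m_\epsilon))^+=\alpha_\epsilon\,\mathbb{1}_{\{u_\epsilon\ge\psi(m_\epsilon)\}}(u_\epsilon-\psi(m_\epsilon))$, which holds because $\alpha_\epsilon=1$ wherever $u_\epsilon\neq\psi(m_\epsilon)$. This exact cancellation is what yields the clean equality $\int_0^T\!\int_\Omega(f(m_\epsilon)+\partial_t\psi(m_\epsilon)+\Delta\psi(m_\epsilon))m_\epsilon=\int_\Omega(u_\epsilon(0)-\psi(m_0))m_0$ before passing to the limit. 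Once you add the third pairing and this algebraic identity, the rest of your outline goes through as in the paper.
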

\begin{proof}
We take $(u_{\epsilon}, m_{\epsilon}, \alpha_{\epsilon})_{\epsilon > 0}$ a sequence of solutions of the penalized system. Using the same arguments as in the proof of this statement for the stationary case, we obtain uniform bounds on $(u_{\epsilon}, m_{\epsilon})_{\epsilon > 0}$ and we find a limit $(u,m)$ such that
\begin{itemize}
\item $(u,m) \in L^2((0,T),H^1_0(\Omega) \cap H^2(\Omega)) \times L^2((0,T), H^1_0(\Omega))$
\item $u$ solves the obstacle problem with terminal condition $\psi(m(T))$, source $f(m)$ and obstacle $\psi(m(T))$
\item $\partial_t m - \Delta m = 0$ in $\{ u < \psi(m)\}$ in $\mathcal{D}'(\Omega)$
\end{itemize}
We will here show why $\int_{\{u = \psi(m)\}} (f(m) + (\partial_t + \Delta)\psi(m))m = 0$. Using the equations satisfied by $u_{\epsilon}$ and $m_{\epsilon}$, and multiplying the equation in $u_{\epsilon}$ by $m_{\epsilon}$ and the one in $m_{\epsilon}$ by $u_{\epsilon}$ we can write :

\[
\begin{aligned}
0 = & \int_0^T \int_{\Omega} (-\partial_t u_{\epsilon} - \Delta u_{\epsilon} + \frac{1}{\epsilon} (u_{\epsilon} - \psi(m_{\epsilon}))^+ - f(m_{\epsilon}))m_{\epsilon}\\
& - \int_0^T \int_{\Omega} (\partial_t m_{\epsilon} -\Delta m_{\epsilon} + \frac{\alpha_{\epsilon}}{\epsilon}\mathbb{1}_{\{u_{\epsilon} \geq \psi(m_{\epsilon})\}} m_{\epsilon})u_{\epsilon} 
\end{aligned}
\]

Hence,

\[
\begin{aligned}
 \int_{\Omega} \psi(m_{\epsilon}(T))m_{\epsilon}(T) - u_{\epsilon}(0)m_0 & =  \int_0^T \int_{\Omega} (\frac{1}{\epsilon} (u_{\epsilon} - \psi(m_{\epsilon}))^+ - f(m_{\epsilon}))m_{\epsilon}\\
 & - \int_0^T \int_{\Omega}\frac{\alpha_{\epsilon}}{\epsilon}\mathbb{1}_{\{u_{\epsilon} \geq \psi(m_{\epsilon})\}} m_{\epsilon}u_{\epsilon}
\end{aligned}
\]

We now use the equation verified by $m_{\epsilon}$ to interpret the term $\int_{\Omega} \psi(m_{\epsilon}(T))m_{\epsilon}(T)$

\[
\begin{aligned}
\int_0^T \int_{\Omega} (\frac{1}{\epsilon} (u_{\epsilon} - \psi(m_{\epsilon}))^+ & - f(m_{\epsilon}))m_{\epsilon}  - \frac{\alpha_{\epsilon}}{\epsilon}\mathbb{1}_{\{u_{\epsilon} \geq \psi(m_{\epsilon})\}} m_{\epsilon}u_{\epsilon} = \int_{\Omega} - u_{\epsilon}(0)m_0 \\
& + \int_{\Omega} \psi(m_0)m_0 +\int_0^T \int_{\Omega} ( \partial_t \psi(m_{\epsilon}) + \Delta \psi(m_{\epsilon})\\
& - \int_0^T \int_{\Omega}\frac{\alpha_{\epsilon}}{\epsilon} \mathbb{1}_{\{u_{\epsilon} \geq \psi(m_{\epsilon})\}} \psi(m_{\epsilon}))m_{\epsilon}
\end{aligned}
\]

Using the fact that $(u_{\epsilon} - \psi(m_{\epsilon}))^+ = \alpha_{\epsilon} \mathbb{1}_{\{ u_{\epsilon} \geq \psi(m_{\epsilon}) \}} (u_{\epsilon} - \psi(m_{\epsilon}))$, we deduce

\[
\int_0^T \int_{\Omega} (f(m_{\epsilon}) + \partial_t \psi(m_{\epsilon}) + \Delta \psi(m_{\epsilon}))m_{\epsilon} = \int_{\Omega}  (u_{\epsilon}(0)-\psi(m_0))m_0 
\]

Passing to the limit we obtain

\[
\int_0^T \int_{\Omega} (f(m) + \partial_t \psi(m) + \Delta \psi(m))m = \int_{\Omega}  (u(0)-\psi(m_0))m_0 
\]

Now, because $\partial_t m - \Delta m = 0$ on $\{ u < \psi(m)\}$, we derive finally

\[
\int_{\{u = \psi(m)\}} (f(m) + \partial_t \psi(m) + \Delta \psi(m))m
\]

which completes the proof of the fact that $(u,m)$ is a mixed solution of $(OSMFG)$.
\end{proof}

\section{Uniqueness of mixed solutions}
We now turn to the question of uniqueness of such mixed solutions. We recall that regarding the counter example presented in the first part, we cannot expect uniqueness to hold in general and some assumptions must be made on the two costs $f$ and $\psi$. The previous section strongly suggests to make an assumption on the term $f(m) + (\partial_t + \Delta )\psi(m)$, as it is involved in most of the calculations. One can look as this term using the following remark. The formal obstacle problem

\[
\max(-\partial_t u - \Delta u -f, u - \psi) = 0
\]

can be equivalently reformulated as 

\[
\max(-\partial_t v - \Delta v - \tilde{f},v ) = 0
\]

where $\tilde{f} = f + \partial_t \psi + \Delta \psi$ and $v = u - \psi$, given that $\psi$ is smooth enough. Hence it is natural to adapt the result of uniqueness in the stationary case by making an assumption on the "new cost" $\tilde{f}$.

\begin{Theorem}
Assume that $f +( \partial_t + \Delta )\psi$ is strictly monotone in $L^2((0,T), H^1_0(\Omega))$, then there is a unique solution of the penalized system and a unique mixed solution for $(OSMFG)$.
\end{Theorem}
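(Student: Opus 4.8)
The strategy for both assertions is the substitution $v_i := u_i - \psi(m_i)$ together with $\tilde f := f + (\partial_t+\Delta)\psi$, under which the obstacle problem with obstacle $\psi(m)$ becomes one with zero obstacle and source $\tilde f(m)$; one then reruns, respectively, the cross--testing of the penalized system and the Lasry--Lions computation of Section~4, now with $-\partial_t-\Delta$ in place of $-\Delta+\mathrm{id}$ and with $m_0$ (at $t=0$) playing the role that the source $\rho$ played in the stationary case. Strict monotonicity of $\tilde f$ is exactly what is needed to close both arguments, and uniqueness of $u$ follows from uniqueness of $m$ in each case.

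\textbf{Penalized system.} Fix $\epsilon>0$ and take two solutions $(u_1,m_1,\alpha_1)$, $(u_2,m_2,\alpha_2)$ of $(9)$. In the unknowns $v_i$ the equations read $-\partial_t v_i-\Delta v_i+\tfrac1\epsilon (v_i)^+=\tilde f(m_i)$ and $\partial_t m_i-\Delta m_i+\tfrac1\epsilon\alpha_i\mathbb{1}_{\{v_i\ge0\}}m_i=0$, with $v_i(T)=0$, $m_i(0)=m_0$ and $v_i=m_i=0$ on $\partial\Omega$. Test the difference of the $v$-equations against $\mu:=m_1-m_2$, the difference of the $m$-equations against $w:=v_1-v_2$, and subtract. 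Since $\mu(0)=0$ and $w(T)=0$, the integration by parts in $t$ produces no boundary term, so the two parabolic bilinear forms cancel and one is left with
\[
\frac1\epsilon\!\int_0^T\!\!\!\int_\Omega\!\Big[\big((v_1)^+-(v_2)^+\big)\mu-\big(\alpha_1\mathbb{1}_{\{v_1\ge0\}}m_1-\alpha_2\mathbb{1}_{\{v_2\ge0\}}m_2\big)w\Big]=\int_0^T\!\!\!\int_\Omega(\tilde f(m_1)-\tilde f(m_2))\mu .
\]
Using the pointwise identity $\alpha_i\mathbb{1}_{\{v_i\ge0\}}m_iv_i=(v_i)^+m_i$ (the two sides agree where $v_i>0$ because the constraint $v_i\ne0\Rightarrow\alpha_i=1$, and both vanish where $v_i\le0$), the bracket rearranges as $-m_2\big((v_1)^+-\alpha_2\mathbb{1}_{\{v_2\ge0\}}v_1\big)-m_1\big((v_2)^+-\alpha_1\mathbb{1}_{\{v_1\ge0\}}v_2\big)$. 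Each factor $(v_i)^+-\alpha_j\mathbb{1}_{\{v_j\ge0\}}v_i$ is nonnegative pointwise (split the cases $v_i>0$ and $v_i\le0$) and $m_i\ge0$ by the maximum principle, so the left-hand side is $\le0$. Hence $\int_0^T\!\int_\Omega(\tilde f(m_1)-\tilde f(m_2))(m_1-m_2)\le0$, and strict monotonicity forces $m_1=m_2$; then $\psi(m_1)=\psi(m_2)$, $f(m_1)=f(m_2)$, so $u_1=u_2$ by uniqueness for the monotone penalized obstacle equation, with $\alpha_i\equiv1$ off the coincidence set.

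\textbf{Mixed solutions.} Let $(u_1,m_1)$, $(u_2,m_2)$ be mixed solutions of $(OSMFG)$; keep $v_i=u_i-\psi(m_i)\le0$ (with $v_i(T)=0$), $w=v_1-v_2$, $\mu=m_1-m_2$, and set $\Omega_i=\{v_i<0\}$. The defining conditions become: $\max(-\partial_t v_i-\Delta v_i-\tilde f(m_i),\,v_i)=0$, $\partial_t m_i-\Delta m_i\le0$ with equality on $\Omega_i$, $\int_{\{v_i=0\}}\tilde f(m_i)m_i=0$, $m_i(0)=m_0$. Exactly as in Section~4: on $\Omega_i$ one has $-\partial_t v_i-\Delta v_i=\tilde f(m_i)$, while on $\{v_i=0\}$ one has $-\partial_t v_i-\Delta v_i=0$ a.e., so $\int_0^T\!\int_\Omega(-\partial_t v_i-\Delta v_i-\tilde f(m_i))m_i=-\int_{\{v_i=0\}}\tilde f(m_i)m_i=0$, and since the first factor is $\le0$ everywhere and $m_j\ge0$, $\int_0^T\!\int_\Omega(-\partial_t v_i-\Delta v_i-\tilde f(m_i))m_j\le0$. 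Subtracting the corresponding two relations gives
\[
\int_0^T\!\!\!\int_\Omega(-\partial_t w-\Delta w)\mu\;\ge\;\int_0^T\!\!\!\int_\Omega(\tilde f(m_1)-\tilde f(m_2))\mu\;\ge\;0 .
\]
For the opposite bound, expand $\int(-\partial_t w-\Delta w)\mu$ into its four terms: the diagonal ones equal $\int_\Omega m_0\,v_i(0)$ — this is the self-dual identity $\int_0^T\!\int_\Omega\tilde f(m_i)m_i=\int_\Omega(u_i(0)-\psi(m_0))m_0$ extracted in the existence proof, using that $\partial_t m_i-\Delta m_i\le0$ is carried by $\{v_i=0\}$ where $v_i$ vanishes (the equality case of the comparison lemma for admissible densities, the time-dependent counterpart of Lemma~1.1) — whereas that same comparison lemma, applied to the admissible density $m_j$ against the negative function $v_i$, gives $\int_0^T\!\int_\Omega(-\partial_t v_i-\Delta v_i)m_j\ge\int_\Omega m_0\,v_i(0)$. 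Adding the four contributions yields $\int_0^T\!\int_\Omega(-\partial_t w-\Delta w)\mu\le0$; squeezing gives $\int_0^T\!\int_\Omega(\tilde f(m_1)-\tilde f(m_2))(m_1-m_2)=0$, hence $m_1=m_2$ by strict monotonicity, and then $u_1=u_2$ by uniqueness for the time-dependent obstacle problem with source $f(m_1)$ and obstacle $\psi(m_1)$.

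\textbf{Main obstacle.} The genuine difficulty is regularity bookkeeping rather than a new idea: I need $v_i\in H^1((0,T),L^2(\Omega))\cap L^2((0,T),H^2\cap H^1_0)$ so that $\partial_t v_i$ and $\Delta v_i$ vanish a.e.\ on $\{v_i=0\}$ and so that $v_i$ is an admissible test function in the comparison lemma — which relies on the parabolic regularity $\partial_t u_i\in L^2((0,T),L^2(\Omega))$ for solutions of the obstacle problem (the obstacle $\psi(m_i)$ already has this regularity by hypothesis) — and I must make sure the equality clause of the comparison lemma is invoked only where it is legitimate, i.e.\ that the nonpositive distribution $\partial_t m_i-\Delta m_i$, being zero on $\{v_i<0\}$, genuinely acts as zero against $v_i$. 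This is precisely the point to be careful about; it is the same subtlety already present in the stationary uniqueness argument, and once it is granted the rest is a line-by-line transcription of the elliptic computation into the parabolic setting.
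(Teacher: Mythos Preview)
Your proposal is correct and follows essentially the same approach as the paper: the substitution $v_i=u_i-\psi(m_i)$, $\tilde f=f+(\partial_t+\Delta)\psi$, then the Lasry--Lions cross-testing squeezed between the monotonicity term and zero via Lemma~2.1. The only cosmetic difference is that for the lower bound you exploit the global sign $-\partial_t v_i-\Delta v_i-\tilde f(m_i)\le0$ together with the integral relation, while the paper decomposes $\Omega_1\cup\Omega_2$ into pieces and uses $\tilde f(m_i)\ge0$ on $\Omega_i^c$ directly; your explicit treatment of the penalized case (the rearranged bracket and pointwise sign check) is also more detailed than the paper, which only remarks that the identity $(u-\psi)^+=\alpha\,\mathbb{1}_{\{u\ge\psi\}}(u-\psi)$ replaces the integral condition.
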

\begin{proof}
This proof relies on the same arguments as the one we gave for classical solutions of $(SOSMFG)$. We only present here the proof of uniqueness for mixed solutions, since the proof of uniqueness for the penalized system is the same as this one except for the fact that $(u - \psi)^+ = \mathbb{1}_{\{ u \geq \psi\}}(u - \psi)$ plays the role of the integral relation satisfied by the mixed solutions.\\
We denote by $(u_1, m_1)$ and $(u_2, m_2)$ two mixed solutions, $v_i = u_i - \psi(m_i)$, $\Omega_i = \{ v_i < 0 \}$ for $i =1,2$ and $v = v_1 - v_2$ and $m = m_1 - m_2$ and by  $\tilde{f} = f + \partial_t \psi + \Delta \psi$. We can compute

\[
\begin{aligned}
\int_{\Omega_1 \cup \Omega_2} (-\partial_t v - \Delta v)m = & \int_{\Omega_1 \cap \Omega_2} (\tilde{f}(m_1) - \tilde{f}(m_2))m\\
& -\int_{\Omega_1^c \cap \Omega_2} \tilde{f}(m_2)m + \int_{\Omega_2^c \cap \Omega_1} \tilde{f}(m_1)m
\end{aligned}
\]

From which we deduce

\[
\begin{aligned}
\int_{\Omega_1 \cup \Omega_2} (-\partial_t v - \Delta v)m = & \int_{\Omega_1 \cup \Omega_2} (\tilde{f}(m_1) - \tilde{f}(m_2))m\\
& -\int_{\Omega_1^c \cap \Omega_2} \tilde{f}(m_1)m + \int_{\Omega_2^c \cap \Omega_1} \tilde{f}(m_2)m
\end{aligned}
\]

But we know that $(u_1, m_1)$ and $(u_2, m_2)$ are mixed solutions so 

\[
\int_{\Omega_1^c}\tilde{f}(m_1)m_1 = \int_{\Omega_2^c}\tilde{f}(m_2)m_2 = 0
\]

Hence using this relation in the previous equality, we get

\[
\begin{aligned}
\int_{\Omega_1 \cup \Omega_2} (-\partial_t v - \Delta v)m = & \int_{\Omega_1 \cup \Omega_2} (\tilde{f}(m_1) - \tilde{f}(m_2))m\\
& +\int_{\Omega_1^c \cap \Omega_2} \tilde{f}(m_1)m_2 + \int_{\Omega_2^c \cap \Omega_1} \tilde{f}(m_2)m_1
\end{aligned}
\]

Now recalling the argument that because of the obstacles problem $\tilde{f}(m_i) \geq 0 $ on $\Omega_i^c$ and the monotony of $\tilde{f}$ we obtain, as in the stationary case,

\[
\int_{\Omega_1 \cup \Omega_2} (-\partial_t v - \Delta v)m  \geq 0
\]

Next, let us remark that 

\[
\begin{aligned}
\int_0^T \int_{\Omega} (-\partial_t v - \Delta v)m = & \int_0^T \int_{\Omega}(-\partial_t v - \Delta v)m_1 - \int_{\Omega}v(0)m_0\\
& - \big(  \int_0^T \int_{\Omega}(-\partial_t v - \Delta v)m_2 - \int_{\Omega}v(0)m_0  \big)
\end{aligned}
\]

Using lemma $2.1$ we obtain for $i = 1, 2$ and $j \ne i$

\[
\int_0^T \int_{\Omega} (-\partial_t v_i - \Delta v_i)m_i - \int_{\Omega}v_i(0)m_0 = 0
\]
\[
\int_0^T \int_{\Omega} (-\partial_t v_i - \Delta v_i)m_j - \int_{\Omega}v_i(0)m_0 \geq 0
\]

Then, we deduce successively
\[
\int_{\Omega_1 \cup \Omega_2} (-\partial_t v - \Delta v)m \leq 0
\]
\[
\int_{\Omega_1 \cup \Omega_2} (-\partial_t v - \Delta v)m = 0
\]

We can now state as in the stationary case that

\[
\int_{\Omega_1 \cup \Omega_2} (\tilde{f}(m_1) - \tilde{f}(m_2))m = 0
\]

So by strict monotonicity of $\tilde{f}$, $m_1 = m_2$ and there exists a unique solution of the system.

\end{proof}

\section{The optimal control interpretation}
We present here the analogue of the optimal control approach for this more difficult problem. Even if this approach is more restrictive than the approach we just presented, we believe it is useful.\\
\\
Let $\mathcal{H} := \{ m \in L^2((0,T), H^1_0(\Omega)), s.t., m \geq 0, m(0) = m_0, \partial_t m- \Delta m \leq 0 \}$ where the last inequality is taken in the sense of distributions. Suppose there exist $C^1$ potentials $\mathcal{F}$ and $\Psi$ such that 
\[
\frac{\partial \mathcal{F}}{\partial p}(x, m(t,x)) = f(x,m(t,x))
\]

and 
\[
\frac{\partial \Psi}{\partial p}(x, m(t,x)) = (\partial_t + \Delta) \psi(t,m)
\]

Where $p$ stands for the second variable. Then we can prove :

\begin{Theorem}
If $\mathcal{F} + \Psi$ is strictly convex and a continuous functional from $L^2((0,T),H^1_0(\Omega))$ into $L^1((0,T) \times\Omega)$, then there exists a unique minimizer $m$  of 

\[
\inf_{m' \in \mathcal{H}} \int_0^T \int_{\Omega}\mathcal{F}(m') + \Psi(m')
\]

and $(u,m)$ is the only mixed solutions of $(OSMFG)$ if $u$ is set to be the only solution of the time dependent obstacle problem with source $f(m)$, obstacle $\psi(m)$ and terminal cost $\psi(m(T))$.
\end{Theorem}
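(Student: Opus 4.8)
The plan is to mirror, in the time-dependent setting, the two-step argument used for the stationary relaxed problem (Theorems in Section 4 and its successor): first establish existence and uniqueness of the minimizer by the direct method, then show that the pair $(u,m)$ built from this minimizer is a mixed solution via the Euler--Lagrange inequality, and finally invoke the already-proven uniqueness Theorem to conclude it is the only one.

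\medskip

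\emph{Step 1: existence and uniqueness of the minimizer.} I would take a minimizing sequence $(m_n)_{n}$ in $\mathcal{H}$. Testing the inequality $\partial_t m_n - \Delta m_n \leq 0$ against $m_n$ itself (using that $m_n(0)=m_0\in L^2(\Omega)$ as the boundary term) gives $\frac12\|m_n(T)\|_{L^2}^2 + \int_0^T\|\nabla m_n\|_{L^2}^2 \leq \frac12\|m_0\|_{L^2}^2$, hence a uniform bound in $L^2((0,T),H^1_0(\Omega))$; one also gets a bound on $\partial_t m_n$ in $L^2((0,T),H^{-1}(\Omega))$ from the equation, so by Aubin--Lions a subsequence converges weakly in $L^2((0,T),H^1_0)$ and strongly in $L^2((0,T)\times\Omega)$ to some $m$. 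The constraints $m\geq 0$, $m(0)=m_0$ and $\partial_t m - \Delta m \leq 0$ pass to the (weak) limit since they are closed convex conditions, so $m\in\mathcal{H}$. Strict convexity and continuity of $\mathcal{F}+\Psi$ as a functional into $L^1((0,T)\times\Omega)$ make $\int_0^T\int_\Omega (\mathcal{F}+\Psi)(m')$ weakly sequentially lower semicontinuous on $H^1_0$-valued paths, so $m$ is a minimizer, and strict convexity gives uniqueness. The $C^1$ regularity of $\mathcal{F}+\Psi$ then yields, exactly as in the stationary case, the Euler--Lagrange optimality condition
\[
\forall m'\in\mathcal{H},\quad \int_0^T\int_\Omega \big(f(m) + (\partial_t+\Delta)\psi(m)\big)(m'-m) \geq 0.
\]

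\medskip

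\emph{Step 2: the pair $(u,m)$ is a mixed solution.} Let $u$ be the solution of the time-dependent obstacle problem with source $f(m)$, obstacle $\psi(m)$, terminal cost $\psi(m(T))$; set $v = u - \psi(m)$, which by the reformulation recalled in Section 9 solves $\max(-\partial_t v - \Delta v - \tilde f(m),\, v) = 0$ with $\tilde f = f + (\partial_t+\Delta)\psi$ and $v(T)=0$. I would then reproduce the stationary Theorem's computation: writing the Euler--Lagrange inequality tested against carefully chosen competitors $m'$ (the parabolic analogues of the penalized families $m_1^\epsilon$ and $m_2^\epsilon$, obtained as weak limits of solutions of $\partial_t m' - \Delta m' + \frac1\epsilon \mathbb{1}_{\{v=0\}}(\cdots) = 0$), and using Lemma $2.1$ together with $\tilde f(m)\geq 0 = -\partial_t v - \Delta v$ on $\{v=0\}$, forces first $\partial_t m - \Delta m = \cdots$, i.e. $\partial_t m - \Delta m = 0$ in $\mathcal{D}'(\{u<\psi(m)\})$, and then $\int_{\{u=\psi(m)\}}\tilde f(m)m = 0$. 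Combined with $m\in\mathcal{H}$ (which already gives $\partial_t m - \Delta m\leq 0$, $m(0)=m_0$, the boundary and terminal conditions) and the obstacle problem solved by $u$, this is precisely the Definition of a mixed solution of $(OSMFG)$.

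\medskip

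\emph{Step 3: uniqueness.} Since $\mathcal{F}+\Psi$ is strictly convex, $\tilde f = f + (\partial_t+\Delta)\psi$ is strictly monotone as a map on $L^2((0,T),H^1_0(\Omega))$ (monotonicity of a gradient of a convex functional), so the uniqueness Theorem of Section 10 applies and $(u,m)$ is the unique mixed solution. The main obstacle I expect is Step 2: the construction of the penalized competitor densities $m_1^\epsilon, m_2^\epsilon$ in the parabolic setting and, crucially, verifying that their weak limits still lie in $\mathcal{H}$ (in particular still satisfy the differential inequality and the initial condition $m'(0)=m_0$ — one must penalize without disturbing the data) and that the limiting integral identities hold; the $\{u = \psi(m)\}$ set has no regularity, so all manipulations there must be done purely variationally through Lemma $2.1$, exactly as the sign arguments force the contact-set identity. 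The a priori estimates of Step 1 and the passage to the limit are routine once the Aubin--Lions compactness is set up.
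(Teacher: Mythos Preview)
Your proposal is correct and follows exactly the route the paper prescribes: it explicitly states that ``the proof is step by step the same as the one in the stationary case, so we do not present it here,'' and your three-step plan (direct method on $\mathcal{H}$, Euler--Lagrange tested against parabolic analogues of the penalized competitors $m_1^\epsilon, m_2^\epsilon$ together with Lemma~2.1, then the uniqueness theorem for $(OSMFG)$ under strict monotonicity of $\tilde f$) is precisely that transposition. One small remark: in Step~1 the Aubin--Lions argument is unnecessary (and the claimed $L^2_tH^{-1}_x$ bound on $\partial_t m_n$ is not immediate from a mere inequality), but weak $L^2((0,T),H^1_0)$ convergence already suffices since convexity plus continuity of $\mathcal{F}+\Psi$ yields weak sequential lower semicontinuity, just as in the stationary proof.
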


This theorem relies on the same arguments than the ones we used in the first part. It is natural having in mind the definition of mixed solutions. Indeed in the stationary case the relation satisfied by $m$ in the mixed zone was that  $f(m) = 0$, meaning that the derivative of the function we want to minimize is $0$ when evaluated in $m$ in this zone. Here the relation which is satisfied in the mixed zone is $f(m) + \partial_t \psi(m) + \Delta \psi(m) = 0$, so it is natural to choose a functional such that its derivative with respect to $m$ gives the relation we are looking for. This explanation is of course a heuristic one, but the proof is step by step the same as the one in the stationary case, so we do not present it here.

\section{Remarks on the assumptions on $\psi$}
\subsection{Assumption on the monotonicity}
We made some strong assumptions on $\psi$ in the previous section in order to find uniqueness of solutions. Indeed we assumed that $(\partial_t + \Delta )\psi$ seen as a functional from $L^2((0,T), H^1_0(\Omega))$ to $L^2((0,T), H^2(\Omega) \cap H^1_0(\Omega)) \cap H^1((0,T), L^2(\Omega))$ is strictly monotone. It is not obvious that non-trivial obstacles satisfy such a condition. This is the question we want to address here.\\
\\
We begin with a statement about the model this problem is concerned with. We recall that uniqueness in [19,5] is obtained under the assumption that $f$ is strictly monotone, meaning that for $m'$ bigger than $m$, $f(m')$ is bigger than $f(m)$ and the players pay a higher cost. This assumption tends to force the players to spread. It is then natural to think that under such kind of assumptions, uniqueness may hold. We proved that if $f$ is strictly monotone, then if $\partial_t \psi + \Delta \psi$ is monotone, then uniqueness holds. Remark that the operator $\partial_t + \Delta$ (which is defined only when boundary conditions are imposed, but this is not the purpose of this formal discussion) is decreasing in the following sense if $\partial_t f_1 + \Delta f_1$ is bigger that  $\partial_t f_2 + \Delta f_2$ then $f_2$ is bigger than $f_1$. This means that making the assumption that $\partial_t \psi + \Delta \psi$ is monotone implies that if $m_1$ is bigger than $m_2$, then $\psi(m_2)$ is bigger than $\psi(m_1)$. Note this can be thought as an incitation to leave the game when the players are more. This can be interpreted as in [19,5] as a tendency for the players not to accumulate. From this point of vue the assumption we make in order to have uniqueness is natural. Moreover, it is easy to show that under the assumption that $\partial_t \psi + \Delta \psi$ is monotone, $\psi$ is anti-monotone.\\
\\
We are going to show here that there are non-trivial $\psi$ such that for all $m_1$ and $m_2$ in $L^2((0,T), H^1_0(\Omega))$ the following inequality holds true

\begin{equation}
\int_0^T \int_{\Omega} (\partial_t( \psi(m_1) - \psi(m_2))+ \Delta (\psi(m_1) - \psi(m_2)) ) (m_1 - m_2) \geq 0
\end{equation}

Now let us remark that any $\psi$ which does not depend on $m$ satisfies such an inequality. This makes this assumption coherent with the results of the first part. We here denote by $g$ a monotone operator and define $\psi$ by : given $m$ in $L^2((0,T), H^1_0(\Omega))$, $\psi(m)$ is the only solution of

\[
\begin{cases}
\partial_t \psi(m) + \Delta \psi(m) = g(m) \text{ in } (0,T) \times \Omega \\
\psi(m)(T) = 0 \text{ in } \Omega \\
\forall 0 \leq t \leq T, \psi(m)(t) = 0 \text{ on } \partial \Omega
\end{cases}
\]

Clearly the application $\psi$ defined with this equation is regular enough to be taken as an obstacle in $(OSMFG)$ and satisfied (9).

\subsection{Assumption on the existence of a primitive}
We make here precise the sense in which we can understand the assumption we made in the optimal control approach. We assume there exists a $\Psi$ such that the derivative of $\Psi$ with respect to $m$ is $\partial_t \psi + \Delta \psi$. Of course derivatives in the space of measure are now well understood, see for example the section 2 in [7], however we are not interested in presenting this formalism here and we want to introduce a primitive in $m$ in a local sense, just like we did for the cost $f$. That is why we want $\partial_t \psi + \Delta \psi$ to have a local dependence in $m$, meaning that 

\[
(\partial_t \psi(m) + \Delta \psi(m))(t,x) = g(t,x, m(t,x))
\]

In which $g$ is a smooth function in the second variable. $\Psi$ is then simply the primitive of $g$ with respect to its second argument. Note that there are $\psi$ such that such a $g$ exists, just define for example $\psi(m)$ implicitly as the only solution of 

\[
\begin{cases}
\partial_t \psi(m) + \Delta \psi(m) = g(m) \text{ in }(0,T) \times \Omega \\
\psi(m) = 0 \text{ on } \big( \{T\} \times \Omega \big) \cup \big((0,T) \times \partial \Omega \big)
\end{cases}
\]
Hence there exist non trivial $\psi$ such that we can find a potential $\Psi$ for which 

\[
\frac{\partial \Psi}{\partial p } (t,x,m(t,x)) = (\partial_t + \Delta)(\psi(m))(t,x)
\]

\part{The case of optimal stopping with continuous control}
In this last part, we extend the case of optimal stopping to a case in which both the classical optimal control and optimal stopping can occur, in a MFG setting. We work in the case in which $\psi$ is equal to $0$ because some technical difficulties arise for general obstacles and we do not want to enter in those details. Mainly we are interested with existence and uniqueness of mixed solutions of the following forward-backward system :

\begin{equation*}{(COSMFG)}
\begin{cases}
\max(-\partial_t u - \Delta u + H(x, \nabla u) - f(m), u) = 0 \text{ in } (0,T) \times \Omega \\
\partial_t m - \Delta m - div(m D_p H(x, \nabla u)) = 0 \text{ in } \{ u < 0 \} \\
m = 0 \text{ in } \{ u = 0 \} \\
m(0) = m_0 \text{ and } u(T) = 0 \text{ in } \Omega
\end{cases}
\end{equation*}

We call mixed solution of this problem a couple $(u,m) \in L^2((0,T), H^2(\Omega) \cap H^1_0(\Omega)) \times L^2((0,T), H^1_0(\Omega))$ such that
\begin{itemize}
\item $\max(-\partial_t u - \Delta u + H(x, \nabla u) - f(m), u ) = 0 \text{ in } \mathcal{D}'((0,T) \times \Omega)$
\item $\partial_t m - \Delta m - div(m D_p H(x, \nabla u)) = 0 \text{ in } \mathcal{D}'(\{ u < 0 \})$
\item $\partial_t m - \Delta m - div(m D_p H(x, \nabla u)) \leq 0 \text{ in } \mathcal{D}'((0,T) \times \Omega)$
\item $\int_{\{ u = 0 \}} (f(m) - H(x, 0))m = 0 $
\item $m(0) = m_0 \text{ and } u(T) = 0 \text{ in } \Omega$
\end{itemize}

We keep on with the assumptions we made on the regularity of  $f$ and we precise here assumptions on the hamiltonian $H$ :

\begin{itemize}
\item $H$ is lipschitz in both variables and convex in the second variable
\item $\exists C > 0, \forall x, p \in \Omega \times \mathbb{R}^d, | H(x, p) | \leq C |p|^{2^*}$
\end{itemize}

where $2^{*} = \frac{2d}{d- 2}$ when $d \geq 3$ and any number greater than one when $d \leq 2$.

As in the previous part, we state an analogue of lemma $1.1$ before presenting the results on existence and uniqueness of solutions of this problem. We also present the optimal control interpretation of this problem. We can establish the following result :

\begin{Lemma}
Let $m \in L^2((0,T), H^1_0(\Omega))$, $g \in L^{\frac{2^*}{2^* - 1}}((0,T)\times \Omega)$ be such that $m(0) = m_0$ and 
\[
\partial_t m - \Delta m - div(m g) \leq 0 \text{ in } \mathcal{D}'(\Omega)
\]
Then for any $\phi \in L^2((0,T), H^2(\Omega) \cap H^1_0(\Omega)) \cap H^1((0,T), L^2(\Omega))$ such that $\phi \leq 0$ and $\phi(T) = 0$,

\[
\int_0^T \int_{\Omega} (-\partial_t \phi - \Delta \phi + g\cdot \nabla \phi) m \geq \int_{\Omega} \phi(0)m_0
\]

with an equality if $\partial_t m - \Delta m - div(mg) = 0$ on the support of $\phi$.

\end{Lemma}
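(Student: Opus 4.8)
The plan is to read the hypothesis in the same variational way as Lemmas $1.1$ and $2.1$: since $\partial_t m - \Delta m - \mathrm{div}(mg)$ is a non-positive distribution on $(0,T)\times\Omega$, the object $\nu := -\big(\partial_t m - \Delta m - \mathrm{div}(mg)\big)$ is a non-negative distribution, hence a non-negative Radon measure, and the asserted inequality is nothing but the statement $\langle \nu, -\phi\rangle \ge 0$ (recall $-\phi \ge 0$), once the pairing on the left has been rewritten by integration by parts. So the first task is to check that $-\phi$ is an admissible test function for this inequality, and the second is to carry out the integrations by parts.

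For the integration by parts I would integrate once in time (the boundary contribution at $t=T$ vanishes because $\phi(T)=0$, while the one at $t=0$ produces $\int_\Omega m_0\,\phi(0)$ because $m(0)=m_0$), twice in space for the Laplacian term (no boundary terms, since $m,\phi \in H^1_0(\Omega)$ and $\phi\in H^2(\Omega)$), and once for the divergence term. This turns $\langle \partial_t m - \Delta m - \mathrm{div}(mg),\, -\phi\rangle \le 0$ into
\[
\int_0^T\!\int_\Omega \big(-\partial_t\phi - \Delta\phi + g\cdot\nabla\phi\big)\,m \;-\; \int_\Omega \phi(0)\,m_0 \;=\; \langle \nu, -\phi\rangle \;\ge\; 0 ,
\]
which is exactly the claim. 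The equality case is then immediate: if $\partial_t m - \Delta m - \mathrm{div}(mg) = 0$ on the open support of $\phi$, the measure $\nu$ vanishes there, so $\langle\nu,-\phi\rangle = 0$ and the displayed inequality becomes an equality.

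The only delicate point is everything hidden in the phrase ``admissible test function'', and this is where I expect the real work to be. First, $\phi$ is neither smooth nor compactly supported in time, so one must enlarge the class of functions against which the distributional inequality may be tested; I would do this by approximating $\phi$ in the norm of $L^2((0,T),H^2(\Omega)\cap H^1_0(\Omega))\cap H^1((0,T),L^2(\Omega))$ by smooth functions, using a spatial regularization compatible with the homogeneous Dirichlet condition and retaining control of the trace at $t=0$, and then passing to the limit. Second, for $\int_\Omega \phi(0)m_0$ to be meaningful one needs $m$ to possess a time trace; this follows because $\Delta m \in L^2((0,T),H^{-1}(\Omega))$ and, by the Sobolev embedding $H^1_0(\Omega)\hookrightarrow L^{2^*}(\Omega)$ together with the assumption $g \in L^{2^{*}/(2^{*}-1)}((0,T)\times\Omega)$, the term $\mathrm{div}(mg)$ lies in a suitable space of space--time distributions, so that $\partial_t m$ is controlled in a space for which $m$ is weakly continuous in time and $m(0)=m_0$ makes sense. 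The same Sobolev embedding and conjugate exponent --- possibly combined with the parabolic interpolation bound on $m$ inherited from the inequality --- are precisely what make all the products $m\,\partial_t\phi$, $m\,\Delta\phi$ and $m\,g\cdot\nabla\phi$ integrable, hence the integrations by parts licit. Once these compatibility checks on the integrability exponents are made, the rest is, as in the elliptic and parabolic cases already treated, just the variational reformulation of the hypothesis.
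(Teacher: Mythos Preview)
Your proposal is correct and follows exactly the paper's approach: the paper's own proof consists of the single sentence ``This statement is only the variational interpretation of the inequality $\partial_t m - \Delta m - \mathrm{div}(mg) \le 0$, taking into account the boundary conditions,'' which is precisely what you have unpacked in detail. The integration-by-parts computation and the equality case are handled correctly, and your discussion of the approximation, trace, and integrability issues (which the paper omits entirely) is appropriate.
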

\begin{proof}
This statement is only the variational interpretation of the inequality 
\[
\partial_t m - \Delta m - div(m g) \leq 0
\]
Taking into account the boundary conditions.
\end{proof}

\section{Existence of mixed solutions}
Once again we prove that there exist mixed solutions for $(COSMFG)$ using a penalized version of it. Since the proof is again a very easy adaptation of the proof of existence for $(SOSMFG)$, we do not detail some arguments. We prove the following result

\begin{Theorem}
There exists at least one mixed solution of $(COSMFG)$.
\end{Theorem}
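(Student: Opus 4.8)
The plan is to mimic the existence proof for $(SOSMFG)$ given in the first part, carrying every elliptic argument over to the parabolic setting while tracking the extra first-order term $\mathrm{div}(mD_pH(x,\nabla u))$. First I would introduce, for each $\epsilon>0$, the penalized system
\[
\begin{cases}
-\partial_t u_\epsilon - \Delta u_\epsilon + H(x,\nabla u_\epsilon) + \tfrac1\epsilon (u_\epsilon)^+ = f(m_\epsilon) & \text{in } (0,T)\times\Omega,\\
\partial_t m_\epsilon - \Delta m_\epsilon - \mathrm{div}(m_\epsilon D_pH(x,\nabla u_\epsilon)) + \tfrac1\epsilon \alpha_\epsilon \mathbb{1}_{\{u_\epsilon\geq 0\}} m_\epsilon = 0 & \text{in } (0,T)\times\Omega,\\
u_\epsilon = m_\epsilon = 0 \text{ on } \partial\Omega,\quad u_\epsilon(T)=0,\quad m_\epsilon(0)=m_0,\\
0\leq\alpha_\epsilon\leq 1,\quad u_\epsilon\neq 0 \Rightarrow \alpha_\epsilon = 1,
\end{cases}
\]
and assert the existence of a solution by the same Kakutani fixed-point argument used for $(7)$ in the stationary case (the upper semicontinuity of the set-valued map $u\mapsto\{m : (u,m,\alpha)\text{ solves the }m\text{-equation}\}$ is obtained exactly as before, the first-order term being continuous in $u$ by the Lipschitz assumption on $D_pH$ together with the Sobolev embedding encoded in the growth condition with exponent $2^*$).

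Next I would derive $\epsilon$-uniform estimates. Testing the $m_\epsilon$-equation against $m_\epsilon$ and using the growth bound $|D_pH|\lesssim 1$ (Lipschitz in $p$) together with Young's inequality and the embedding $H^1_0(\Omega)\hookrightarrow L^{2^*}(\Omega)$ gives a bound for $m_\epsilon$ in $L^2((0,T),H^1_0(\Omega))$ and, via the equation, for $\partial_t m_\epsilon$ in a negative Sobolev space, hence compactness in $L^2((0,T)\times\Omega)$ by Aubin--Lions. From this, $f(m_\epsilon)$ is bounded in $L^2$, and $H(x,\nabla u_\epsilon)$ is controlled by the growth hypothesis, so $u_\epsilon$ is bounded in $L^2((0,T),H^2\cap H^1_0)$ and we extract a weak limit $(u,m)$. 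Passing to the limit in the $u_\epsilon$-equation gives that $u$ solves the obstacle problem with source $f(m)-H(x,\nabla u)$; here one must be careful that $\nabla u_\epsilon \to \nabla u$ strongly (which follows from the $H^2$ bound plus Aubin--Lions for $u_\epsilon$) so that $H(x,\nabla u_\epsilon)\to H(x,\nabla u)$. The inequality $\partial_t m - \Delta m - \mathrm{div}(mD_pH(x,\nabla u))\leq 0$ passes to the limit because the penalization term has the right sign, and testing against $\phi\in\mathcal{D}(\{u<0\})$ and applying Fatou's lemma (as in the stationary proof) kills the penalization term on $\{u<0\}$, yielding $\partial_t m - \Delta m - \mathrm{div}(mD_pH(x,\nabla u)) = 0$ there.

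Finally, for the scalar identity $\int_{\{u=0\}}(f(m)-H(x,0))m = 0$ I would test the $u_\epsilon$-equation against $m_\epsilon$ and the $m_\epsilon$-equation against $u_\epsilon$, subtract, and use the boundary/terminal/initial conditions together with the algebraic identity $(u_\epsilon)^+ = \alpha_\epsilon\mathbb{1}_{\{u_\epsilon\geq 0\}}u_\epsilon$; the cross term $\int m_\epsilon D_pH(x,\nabla u_\epsilon)\cdot\nabla u_\epsilon$ combines with $\int H(x,\nabla u_\epsilon)m_\epsilon$, and since $u_\epsilon \to 0$ in the contact set while $\nabla u_\epsilon\to 0$ there (the contact set being where $u$ attains its maximum $0$), the Hamiltonian contribution reduces to $H(x,0)$ on $\{u=0\}$ in the limit. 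Combining with the already-established $\partial_t m - \Delta m - \mathrm{div}(mD_pH(x,\nabla u)) = 0$ on $\{u<0\}$ gives the desired identity. The main obstacle I expect is the handling of the first-order term: ensuring strong convergence of $\nabla u_\epsilon$ so that $H(x,\nabla u_\epsilon)$ and $D_pH(x,\nabla u_\epsilon)$ converge, and justifying the limit of the cross term $\int m_\epsilon D_pH(x,\nabla u_\epsilon)\cdot\nabla u_\epsilon$, which requires the $2^*$-growth control precisely so that $m_\epsilon |\nabla u_\epsilon|$ is equi-integrable.
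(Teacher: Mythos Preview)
Your proposal is correct and follows the paper's own proof essentially step by step: the same penalized system, the same Kakutani argument for existence at fixed $\epsilon$, the same compactness-and-limit passage, and the same cross-testing of the two equations to obtain the integral identity. The only cosmetic difference is in how the final identity is extracted: the paper first passes to the limit in the cross-tested relation to obtain
\[
0 = \int_\Omega u(0)m_0 + \int_0^T\!\!\int_\Omega \big(H(x,\nabla u)-f(m)\big)m + \int_0^T\!\!\int_\Omega \mathrm{div}\big(m\,D_pH(x,\nabla u)\big)\,u,
\]
and then invokes Lemma~3.1 (with $\phi=u$, $g=D_pH(x,\nabla u)$, using that $\partial_t m-\Delta m-\mathrm{div}(mD_pH)=0$ on $\{u<0\}=\{u\neq 0\}$) to replace $\int_\Omega u(0)m_0$ by $\int(-\partial_t u-\Delta u+D_pH\cdot\nabla u)m$, whence $\int_{\{u=0\}}(f(m)-H(x,0))m=0$ follows since all derivatives of $u$ vanish a.e.\ on $\{u=0\}$. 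Your description ``$\nabla u_\epsilon\to 0$ on the contact set'' is slightly imprecise (it is $\nabla u=0$ a.e.\ on $\{u=0\}$ for the \emph{limit} $u\in H^2$, not a statement about $u_\epsilon$), but the intended argument is the same as the paper's.
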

\begin{proof}
As we did in the previous two parts we are going to introduce the following penalized version of $(COSMFG)$ 

\[
\begin{cases}
-\partial_t u - \Delta u + H(x, \nabla u) + \frac{1}{\epsilon}(u)^+ = f(m) \text{ in } (0,T) \times \Omega \\
\partial_t m - \Delta m - div(m D_{p}H(x, \nabla u)) + \frac{\alpha}{\epsilon} \mathbb{1}_{\{ u \geq 0 \} } m = 0 \text{ in } (0,T) \times \Omega \\
m(0) = m_0 \text{ and } u(T) = 0 \text{ in } \Omega \\
m = u = 0 \text{ on } \partial \Omega \\
u \ne 0 \Rightarrow \alpha = 1
\end{cases}
\]

The proof of existence of solutions for such a system is the same as the one we gave in the case of $(SOSMFG)$, so we do not present it here. We have a sequence $((u_{\epsilon}, m_{\epsilon}))_{\epsilon > 0}$ of elements of $(L^2((0,T), H^2(\Omega) \cap H^1_0(\Omega)))^2$ of solutions of the penalized system. Using a priori estimates on the equations ([21]) we deduce that, up to a subsequence, $((u_{\epsilon}, m_{\epsilon}))_{\epsilon > 0}$ converges to $(u,m)$ for the topology of $L^2((0,T), H^1(\Omega)) \times L^2((0,T), L^2(\Omega))$  with $(u,m) \in L^2((0,T), H^2(\Omega) \cap H^1_0(\Omega)) \times L^2((0,T), H^1_0(\Omega))$. Because of the regularity of $f$ and $H$, $u$ solves 
\[
\max(-\partial_t u - \Delta u + H(x, \nabla u) - f(m), u ) = 0 \text{ in } (0,T) \times \Omega
\]

and we obtain for $m$ that

\[
\begin{cases}
\partial_t m - \Delta m - div(m D_p H(x, \nabla u)) \leq 0 \text{ in } (0,T) \times \Omega \\
\partial_t m - \Delta m - div(m D_{p} H(x, \nabla u)) = 0 \text{ in } \{ u < 0 \}
\end{cases}
\]

We now check that  $\int_{\{ u = 0 \}} (f(m)  - H(x, 0))m = 0$ to prove that $(u,m)$ is indeed a mixed solution of $(COSMFG)$. As in the previous parts we multiply the equation in $u$ by $m$ and we integrate, and vice-versa for the equation in $m$ (at the penalized level). This leads to 

\[
\begin{aligned}
 0 = & \int_0^T \int_{\Omega} (-\partial_t u_{\epsilon} - \Delta u_{\epsilon} + H(x, \nabla u_{\epsilon}) + \frac{1}{\epsilon}(u_{\epsilon})^+ - f(m_{\epsilon}))m_{\epsilon} \\
 & - \int_0^T \int_{\Omega}(\partial_t m_{\epsilon} - \Delta m_{\epsilon} - div(m_{\epsilon} D_{p}H(x, \nabla u_{\epsilon})) + \frac{\alpha}{\epsilon} \mathbb{1}_{\{ u_{\epsilon} \geq 0\} } m_{\epsilon})(u_{\epsilon})
\end{aligned}
\]

By integration by parts in time, we obtain, using the fact that $(u)^+ = \mathbb{1}_{\{ u \geq 0 \} }u$
\[
\begin{aligned}
0 = & \int_{\Omega} (u_{\epsilon}(0) )m_0 + \int_0^T \int_{\Omega}( H(x, \nabla u_{\epsilon}) - f(m_{\epsilon})) m_{\epsilon}\\
& + \int_0^T \int_{\Omega} div(m_{\epsilon} D_{p}H(x, \nabla u_{\epsilon}))(u_{\epsilon})  
\end{aligned}
\]

because of the convergence (up to a subsequence) of $((u_{\epsilon}, m_{\epsilon}))$ we can pass to the limit in this equality and we get

\[
\begin{aligned}
0 = & \int_{\Omega} (u(0))m_0 + \int_0^T \int_{\Omega}( H(x, \nabla u) - f(m)) m\\
& + \int_0^T \int_{\Omega} div(m D_{p}H(x, \nabla u))(u)
\end{aligned} 
\]

Then using lemma $3.1$ we deduce that

\[
0 =   \int_0^T \int_{\Omega}( -\partial_t u - \Delta u + H(x, \nabla u) - f(m)) m 
\]

Which leads to

\[
\int_{\{ u = 0 \}} (f(m) - H(x, 0))m = 0
\]

\end{proof}

\section{Uniqueness of mixed solutions}
We here show that adding a convex hamiltonian do not alter the property of uniqueness of the problem.
\begin{Theorem}
If $f$ is strictly monotone, then there exists a unique mixed solution of $(COSMFG)$.
\end{Theorem}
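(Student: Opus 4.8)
The plan is to reproduce, in its parabolic form, the Lasry--Lions uniqueness computation already used for classical solutions of $(SOSMFG)$ and for mixed solutions of $(OSMFG)$, the only new ingredient being the term $H(x,\nabla u)$, which will be absorbed through the convexity of $p\mapsto H(x,p)$. I would start from two mixed solutions $(u_1,m_1)$ and $(u_2,m_2)$, put $u=u_1-u_2$, $m=m_1-m_2$, $\Omega_i:=\{u_i<0\}$, and introduce the HJB residuals $P_i:=-\partial_t u_i-\Delta u_i+H(x,\nabla u_i)-f(m_i)$. From the obstacle equation one has $P_i\le 0$ in $\mathcal D'((0,T)\times\Omega)$, $P_i=0$ on $\Omega_i$, and, since $u_i$ and $\nabla u_i$ vanish almost everywhere on $\{u_i=0\}$ (a standard feature of the obstacle problem, so that $-\partial_t u_i-\Delta u_i=0$ and $H(x,\nabla u_i)=H(x,0)$ there a.e.), $P_i=H(x,0)-f(m_i)$ a.e. on $\Omega_i^c$. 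Consequently the fourth item in the definition of a mixed solution of $(COSMFG)$, namely $\int_{\{u_i=0\}}(f(m_i)-H(x,0))m_i=0$, reads exactly $\int_0^T\!\int_\Omega P_i m_i=0$.

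First evaluation. Using $P_i\le 0$, $m_i\ge 0$ and the two relations $\int\!\int P_i m_i=0$, I get immediately
\[
\int_0^T\!\!\int_\Omega (P_1-P_2)(m_1-m_2)\;=\;-\int_0^T\!\!\int_\Omega P_1 m_2\;-\;\int_0^T\!\!\int_\Omega P_2 m_1\;\ge\;0 .
\]

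Second evaluation. I would rewrite the same quantity as $I+J-K$, where $I=\int\!\int(-\partial_t u-\Delta u)m$, $J=\int\!\int(H(x,\nabla u_1)-H(x,\nabla u_2))m$ and $K=\int\!\int(f(m_1)-f(m_2))(m_1-m_2)$, the latter being $\ge 0$ and strictly positive unless $m_1=m_2$, by strict monotonicity of $f$. For $I$, expand into the four products $\int\!\int(-\partial_t u_i-\Delta u_i)m_j$ and apply Lemma $3.1$: with $\phi=u_i$ and $g=D_pH(x,\nabla u_i)$ it gives the equality $\int\!\int(-\partial_t u_i-\Delta u_i+D_pH(x,\nabla u_i)\cdot\nabla u_i)m_i=\int_\Omega u_i(0)m_0$ (the $m_i$-equation holds on $\Omega_i$, hence on the interior of the support of $u_i$), and with $\phi=u_j$, $j\ne i$, $g=D_pH(x,\nabla u_i)$ it gives the inequality $\int\!\int(-\partial_t u_j-\Delta u_j+D_pH(x,\nabla u_i)\cdot\nabla u_j)m_i\ge\int_\Omega u_j(0)m_0$, since $u_j\le 0$ and $u_j(T)=0$. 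Summing the four resulting relations, the boundary contributions $\int_\Omega u_i(0)m_0$ cancel and, after reorganizing the first-order parts and adding $J$,
\[
I+J\;\le\;\int_0^T\!\!\int_\Omega m_1\big(H(x,\nabla u_1)-H(x,\nabla u_2)-D_pH(x,\nabla u_1)\cdot(\nabla u_1-\nabla u_2)\big)\;+\;\int_0^T\!\!\int_\Omega m_2\big(H(x,\nabla u_2)-H(x,\nabla u_1)-D_pH(x,\nabla u_2)\cdot(\nabla u_2-\nabla u_1)\big).
\]
By convexity of $p\mapsto H(x,p)$ each parenthesis is $\le 0$, and $m_1,m_2\ge 0$, so $I+J\le 0$ and therefore $\int\!\int(P_1-P_2)m\le -K\le 0$.

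Comparing the two evaluations yields $0\le\int\!\int(P_1-P_2)m\le -K\le 0$, hence $K=0$ and, by strict monotonicity of $f$, $m_1=m_2$; then $u_1$ and $u_2$ both solve the time dependent HJB obstacle problem with source $f(m_1)$ and terminal datum $0$, so $u_1=u_2$ by uniqueness for that problem. I expect the real obstacle to be the bookkeeping in the second evaluation: one must pair $J$ precisely with the ``mismatched'' first-order contributions produced by the cross terms of Lemma $3.1$ so that the sum collapses exactly into the two nonnegative convexity quantities above --- this is where the convexity of $H$ is genuinely used, and it is the exact parabolic analogue of the Lasry--Lions trick. The remaining points are routine and handled as in the existence proof: the integrability of all the products written above (using that $D_pH$ is bounded since $H$ is Lipschitz, the growth bound $|H(x,p)|\le C|p|^{2^*}$, $\nabla u_i\in L^2((0,T)\times\Omega)$, and the Sobolev embedding applied to $m_i\in L^2((0,T),H^1_0(\Omega))$), and the justification of the equality case of Lemma $3.1$ on the closure of the continuation set. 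The same argument, with $(u-\psi)^+=\mathbb{1}_{\{u\ge\psi\}}(u-\psi)$ replacing the integral mixed condition, also gives uniqueness for the penalized system.
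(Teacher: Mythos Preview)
Your proof is correct and follows essentially the same route as the paper: both arguments combine the obstacle-problem structure and the mixed integral condition to get one sign for the key cross quantity, then use Lemma~3.1 together with the convexity of $H$ to get the opposite sign, forcing $\int(f(m_1)-f(m_2))(m_1-m_2)=0$ and hence $m_1=m_2$. Your introduction of the residuals $P_i=-\partial_t u_i-\Delta u_i+H(x,\nabla u_i)-f(m_i)$ is a neat repackaging that makes the ``first evaluation'' a one-line computation (from $\int P_i m_i=0$, $P_i\le 0$, $m_j\ge 0$), whereas the paper obtains the same inequality by explicitly splitting the domain into $\Omega_1\cap\Omega_2$, $\Omega_1\cap\Omega_2^c$, $\Omega_2\cap\Omega_1^c$; the second evaluation and the use of convexity are identical in both.
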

\begin{proof}
The proof here is once again the adaptation of the one we gave in the first part. We take $(u_1, m_1)$ and $(u_2, m_2)$ two mixed solutions of $(COSMFG)$ and we are going to show that 

\[
\int_0^T \int_{\Omega} ((-\partial_t - \Delta)(u_1 - u_2) + H(x, \nabla u_1) - H(x, \nabla u_2))(m_1 - m_2) = 0
\]
 First we note the sets $\Omega_i = \{ u_i < 0 \}$ for $i = 1, 2$ and we compute
 
 \[
 \begin{aligned}
 & \int_0^T \int_{\Omega} ((-\partial_t - \Delta)(u_1 - u_2) + H(x, \nabla u_1) - H(x, \nabla u_2))(m_1 - m_2) =\\
 & \int_{\Omega_1 \cap \Omega_2} (f(m_1) - f(m_2))(m_1 - m_2) + \int_{\Omega_1 \cap \Omega_2^c} (f(m_1) - H(x, 0))(m_1 - m_2) \\
 & + \int_{\Omega_2 \cap \Omega_1^c} (f(m_2) - H(x, 0))(m_2 - m_1) 
 \end{aligned}
 \]
 
Once again, using the same arguments we used in the previous part we deduce 
\[
\begin{aligned}
&\int_0^T \int_{\Omega} ((-\partial_t - \Delta)(u_1 - u_2) + H(x, \nabla u_1) - H(x, \nabla u_2))(m_1 - m_2) \geq \\
& \int_0^T \int_{\Omega}(f(m_1) - f(m_2))(m_1 - m_2)
\end{aligned}
\]

Now we obtain using lemma $3.1$ that
 
 \[
 \begin{aligned}
 \int_0^T \int_{\Omega} ((-\partial_t - \Delta)(u_1 - u_2))(m_1 - m_2) \leq  & \int_0^T \int_{\Omega} (D_{p} H(x, \nabla u_1) \cdot \nabla (u_2 - u_1) )m_1\\
 & + \int_0^T \int_{\Omega} (D_{p} H(x, \nabla u_2) \cdot \nabla (u_1 - u_2) )m_2
 \end{aligned}
 \]
 
 Hence, using the convexity of $H$ we get that
 
 \[
 \int_0^T \int_{\Omega} ((-\partial_t - \Delta)(u_1 - u_2))(m_1 - m_2) \leq \int_0^T \int_{\Omega} (H(x, \nabla u_2) - H(x, \nabla u_1))(m_1 - m_2)
 \]
 
 So we have proven
 \[
\int_0^T \int_{\Omega} ((-\partial_t - \Delta)(u_1 - u_2) + H(x, \nabla u_1) - H(x, \nabla u_2))(m_1 - m_2) = 0
\]

and we conclude as we did before.
\end{proof}

\section{The optimal control interpretation}
We end this part with the presentation of the optimal control interpretation of $(COSMFG)$. We are not interested in giving results or proofs in this part as it is only the mix of what we can found in [6,8] and what we did in the optimal control interpretation of $(SOSMFG)$. We just want to show to the reader to which optimization problem $(COSMFG)$ is linked. Moreover we do not want to add some technical difficulties here. We suppose that there exists $\mathcal{F} \in C^1(\Omega \times \mathbb{R})$ such that

\[
\forall (x,p) \in \Omega \times \mathbb{R}, \frac{\partial \mathcal{F}}{\partial p}(x, p) = f(x,p)
\]

The following optimization problem is then naturally linked to $(COSMFG)$

\[
\inf_{(\alpha, m) \in \mathcal{H}} \big\{ \int_0^T \int_{\Omega} \mathcal{F}(m) - H(x, 0)m +  L(x, \alpha)m \big\}
\]

where $\mathcal{H}$ is the set of $(\alpha, m)$ in $L^2((0,T) , H^1(\Omega)) \times L^2((0,T), H^2(\Omega)\cap H^1_0(\Omega))$, such that

\[
\partial_t m - \Delta m -div(\alpha m) \leq 0
\]

in the sense of distributions. $L$ is here the Fenchel conjugate of $H$ with respect to the second variable. Such a problem is convex and under strict convexity of $\mathcal{F}$, we have existence of a unique minimizer.

\section*{Acknowledgments} 
I would like to thank Pierre-Louis Lions (Collège de France) for his helpful advices. 

\nocite{*}
\bibliographystyle{plain}
\bibliography{osmfg}
\end{document}